\newcommand\cyr
\renewcommand\rmdefault{wncyr}
\renewcommand\sfdefault{wncyss}
\renewcommand\encodingdefault{OT2}
\DeclareTextFontCommand{\textcyr}{\cyr}
\def\@settitle
\theoremstyle{plain} 
\newtheorem{teo}{Theorem}[section] 
\newtheorem{lem}[teo]{Lemma}
\newtheorem{prop}[teo]{Proposition}
\newtheorem{defn}[teo]{Definition}
\newtheorem{oss}[teo]{Remark}
\numberwithin{equation}{section} 
\newcommand{\R}{\ensuremath{\mathbb{R}}} 
\newcommand{\Rn}{\ensuremath{\mathbb{R}^n}} 
\newcommand{\Rp}{\ensuremath{\mathbb{R}^{n+1}_+}} 
\newcommand{\N}{\ensuremath{\mathbb{N}}} 
\newcommand{\Fl}{\ensuremath{\mathcal{F}_{\varepsilon}}} 
\newcommand{\Lst}{\ensuremath{L^{2_s^*}(\Rn)}} 
\newcommand{\eps}{\ensuremath{\varepsilon}} 
\newcommand{\ukp}{\ensuremath{(U_k)_+}} 
\newcommand{\al}{\ensuremath{ &\;}} 
\newcommand{\lr}[1]{\left( #1 \right)} 
\newcommand{\lrq}[1]{\left[ #1 \right]} 
 \newcommand{\scp}[1]{\langle #1 \rangle} 
 \newcommand{\cx}{\ensuremath{2_s^*}} 
\newcommand{\eqlab}[1]{\begin{equation}  \begin{aligned}#1 \end{aligned}\end{equation}} 
\newcommand{\bgs}[1]{\begin{equation*} \begin{aligned}#1\end{aligned}\end{equation*}} 
 \newcommand{\syslab}[2] []  {\begin{equation}#1  \left\{\begin{aligned}#2\end{aligned}\right.\end{equation}} 
  \newcommand{\sys}[2][]{\begin{equation*}#1  \left\{\begin{aligned}#2\end{aligned}\right.\end{equation*}}
 \newcommand{\Hsa}{\ensuremath{\dot  H_a^s(\R^{n+1}_+)}}  
\date{}
\title[Green function for the fractional Laplacian]{Some observations on the Green function for the ball in the fractional Laplace framework}
\author {Claudia Bucur, Mar\'ia Medina}
\subjclass{Primary: 35A15, 35J20,35R11. Secondary: 35D30.}
\keywords{Fractional elliptic problems, critical growth, convex nonlinearities.}
\email{c.bucur@unimelb.edu.au, mamedinad@mat.puc.cl }
\thanks{We would like to thank professors Enrico Valdinoci and Jes\'us Garc\'ia Azorero for their very useful discussions and suggestions.}
\title[A fractional elliptic problem with critical growth and
convex nonlinearities]{A fractional elliptic problem in $\R^n$ with critical growth and
convex nonlinearities}
\begin{document}
\maketitle

 \address{(Claudia Bucur) }{\scshape School of Mathematics and Statistics, 813 Swanston Street, Parkville VIC 3010, Australia} 

\address{(Mar\'ia Medina) }{\scshape Facultad de Matem\'aticas, Pontificia Universidad Cat\'olica de Chile, Avenida Vicu\~na Mackenna 4860, Santiago, Chile}

 \begin{abstract}
\noindent In this paper we prove the existence of a positive solution of the nonlinear and nonlocal elliptic equation in $\Rn$ 
\[ (-\Delta)^s u =\varepsilon h u^q+u^{2_s^*-1} \]
in the convex case $1\leq q<2_s^*-1$, where 
		$ 2_s^*={2n}/({n-2s}) $ is the critical fractional Sobolev exponent, $(-\Delta)^s$ is the fractional Laplace operator, $\varepsilon$ is a small parameter and $h$ is a given bounded, integrable function.  		
	The problem has a variational structure and we prove the existence of a solution by using the classical Mountain-Pass Theorem. We work here with the harmonic extension of the fractional Laplacian, which allows us to deal with a weighted (but possibly degenerate) local operator, rather than with a nonlocal energy. \textcolor{black}{In order} to overcome the loss of compactness induced by the critical power we use a Concentration-Compactness principle. Moreover, a finer analysis of the geometry of the energy functional is needed in this convex case with respect to the concave-convex case studied in \cite{maria}.
	\end{abstract}
\tableofcontents

\section{Introduction and main statement}\label{intr}

\noindent  The goal of this paper is to prove the existence of a positive solution to the convex problem 
\begin{equation} \label{problem}(-\Delta)^s u =\varepsilon h u^q+u^{\cx-1} \qquad\mbox{ in }\mathbb{R}^n,
\end{equation}
where \textcolor{black}{$s\in (0,1)$, $n>2s$, $1\leq q <2^*_s-1$ are given quantities}, $\varepsilon>0$ is a small parameter, and $h$ is a function satisfying suitable summability conditions. The main operator in this problem is the fractional Laplacian, defined by
\[ (-\Delta)^s u(x)  = C(n,s) \mbox{ P.V. } \int_{\Rn} \frac{u(x)-u(y)}{|x-y|^{n+2s}}\, dy\]
	for any $x\in \Rn$ \textcolor{black}{and for a function $u$ regular enough\footnote{It is enough to take $u\in\mathcal{S}(\Rn)$ (the Schwartz space of rapidly decreasing functions), or in $ L^{\infty}(\Rn)$ and $C^{2s+\epsilon}$ (for some small $\epsilon>0$) in a neighborhood of $x$, to have a pointwise definition of the fractional Laplacian. Check also \cite{silvestre} for a refinement of the space of definition.}}, where $C(n,s)$ is a positive constant. For details on this operator and applications, see \cite{nonlocal}. See also \cite{hitch} for an introduction to fractional Sobolev spaces. 
\medskip

\noindent \textcolor{black}{The main result of this paper goes as follows.
	\begin{teo}\label{thm}
	Let $q\in [1, \cx-1)$ and $h$ be such that
	\eqlab{\label{h1} & h\in L^1(\Rn)\cap L^{\infty}(\Rn)\quad \mbox{and}\\
	& \mbox{there exists a ball }B\subset \Rn \mbox{ such that } \inf_B h>0.\\
	& \mbox{If }n\in (2s,6s), \mbox{suppose in addition }h\geq 0.}
	Let $\eps>0$ be a small parameter. Then problem \eqref{problem}
 admits a positive (mountain-pass) solution, provided that
	 $n>{\frac{2s(q+3)}{q+1} }$.
	\end{teo}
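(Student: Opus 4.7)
The plan is to combine the Caffarelli-Silvestre extension with a Mountain-Pass plus concentration-compactness argument. Setting $a=1-2s$, problem \eqref{problem} is equivalent, after extension to the half-space $\Rp$, to the degenerate local problem whose weak solutions on $\Hsa$ are critical points of
\bgs{\Fl(U) = \frac{1}{2}\int_{\Rp} y^{a}|\nabla U|^2\,dx\,dy - \frac{\eps}{q+1}\int_{\Rn} h(x)\,(U_+)^{q+1}\,dx - \frac{1}{\cx}\int_{\Rn} (U_+)^{\cx}\,dx,}
where replacing $U$ by $U_+$ in the nonlinear terms is the standard device for deducing positivity a posteriori. Since $q+1>1$ and $\cx>2$, both nonlinearities are superquadratic at zero, and combining the fractional trace Sobolev inequality with the integrability $h\in L^1\cap L^\infty$ yields $\Fl(U)\geq\tfrac12\|U\|_{\Hsa}^2 - C\eps\|U\|_{\Hsa}^{q+1} - C\|U\|_{\Hsa}^{\cx}$. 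For $\eps$ small this produces radii $\rho,\alpha>0$ with $\Fl|_{\|U\|_{\Hsa}=\rho}\geq\alpha$, while the critical term forces $\Fl(tU)\to-\infty$ as $t\to+\infty$ along any ray based at a function whose trace has a non-trivial positive part. The classical Mountain-Pass theorem then produces a Palais-Smale sequence at some level $c_\eps\geq\alpha>0$.

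The heart of the proof is the strict estimate $c_\eps<c^*$, where $c^*:=\tfrac{s}{n}S^{n/(2s)}$ and $S$ is the sharp constant in the fractional (trace) Sobolev inequality; $c^*$ is the threshold below which Palais-Smale sequences cannot develop concentration bubbles. I would insert the rescaled Aubin-Talenti extremals $U_\lambda$, centered at a point of the ball $B\subset\{h>0\}$ provided by \eqref{h1}, as test paths in the min-max. Along the ray $t\mapsto tU_\lambda$ the unperturbed maximum equals $c^*$ exactly, while optimizing $t$ with the full functional gives
\bgs{\max_{t\ge 0}\Fl(tU_\lambda) = c^* - \frac{\eps}{q+1}\int_{\Rn} h\,U_\lambda^{q+1}\,dx + O(\eps^2).}
A scaling analysis of the leading correction, using the $L^{q+1}$-decay of the extremals together with $\inf_B h>0$, shows that for a suitable choice of the concentration parameter $\lambda$ this correction is strictly negative precisely when the dimensional balance $n>{2s(q+3)}/{(q+1)}$ (equivalently, $q>\cx-3$) is satisfied; this is the fractional counterpart of the Brezis-Nirenberg dimensional restriction. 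In the low-dimensional regime $n\in(2s,6s)$ the extra sign assumption $h\geq 0$ is needed so that the negative part of $h$ cannot spoil the sign of the correction. This bubble/energy estimate is the main technical obstacle of the proof and is what distinguishes the purely convex regime $q\geq 1$ from the concave-convex setting \cite{maria}, where the sublinear term automatically lowered the mountain-pass level near the origin.

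Once $c_\eps<c^*$ is in hand, compactness and positivity follow. Any Palais-Smale sequence $\{U_k\}$ at level $c_\eps$ is bounded in $\Hsa$, since the combination $\Fl(U_k)-\tfrac{1}{\cx}\scp{\Fl'(U_k),U_k}$ controls the squared norm thanks to the exponent gap $q+1<\cx$. Up to a subsequence $U_k\rightharpoonup U_\infty$ weakly in $\Hsa$, and the fractional concentration-compactness principle in the extension formulation decomposes the possible loss of compactness as a sum of Dirac masses, each carrying at least $c^*$ units of energy; the bound $c_\eps<c^*$ excludes these, so $U_k\to U_\infty$ strongly and $U_\infty$ is a non-trivial critical point with $\Fl(U_\infty)=c_\eps>0$. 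Testing the extension equation against $(U_\infty)_-$ yields $U_\infty\geq 0$, and the strong maximum principle for the degenerate operator $-\mathrm{div}(y^{a}\nabla\,\cdot\,)$ on $\Rp$ upgrades this to $U_\infty>0$, so that $u(x):=U_\infty(x,0)$ is the desired positive mountain-pass solution of \eqref{problem}.
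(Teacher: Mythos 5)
Your high-level roadmap (extension, Mountain Pass geometry, concentration-compactness below a threshold, positivity via maximum principle) is the same as the paper's, but there is a genuine gap exactly at the step you call the ``heart of the proof,'' and it is the step the paper identifies as its main contribution.

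You take the compactness threshold to be $c^{*}=\frac{s}{n}S^{n/(2s)}$ and claim that $c_\eps<c^{*}$ suffices to rule out bubbling. That is the statement from \cite{AGP} which the present paper explicitly flags as flawed. Tracking the Palais-Smale argument more carefully (Theorem~\ref{PSCthm} here), compactness actually only follows when
\bgs{
c_\eps + c_1\eps^{1+\delta} + \overline{C}\,\eps^{\frac{\cx}{\cx-(q+1)}} < \frac{s}{n}S^{n/(2s)} \qquad (n\geq 6s),
}
with a similar but simpler bound for $n\in(2s,6s)$, and the two extra $\eps$-terms cannot be absorbed into an ``$O(\eps^2)$.'' Your energy expansion
\bgs{
\max_{t\geq 0}\Fl(tU_\lambda)=c^{*}-\frac{\eps}{q+1}\int_{\Rn}h\,U_\lambda^{q+1}\,dx+O(\eps^2)
}
is also not correct as written: since the Aubin--Talenti extremals do not have compact support, a cut-off is needed (the $\bar\phi z_{\mu,\xi}$ in the paper), and this introduces a \emph{positive} error term of order $\mu^{n-2s}$ in the kinetic part (see \eqref{zbar1} and Lemma~\ref{haha}). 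The real content of the argument is a three-way competition between (i) the positive cut-off defect $C_1\mu^{n-2s}$, (ii) the negative gain $-C_3\,\eps\,\mu^{\frac{(2s-n)(q+1)}{2}+n}$ from the $q$-term, and (iii) the extra deficit $c_1\eps^{1+\delta}+\overline{C}\eps^{\cx/(\cx-(q+1))}$ imposed by the corrected Palais-Smale threshold. All three must be balanced by an explicit scaling $\mu=\eps^\beta$ (Section~\ref{proofThm}), and it is this balance --- not a formal $O(\eps^2)$ expansion --- that produces the restriction $n>\frac{2s(q+3)}{q+1}$ and the need for $h\geq 0$ when $n\in(2s,6s)$. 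As written, your proposal does not identify the corrected threshold, does not account for the cut-off error, and therefore does not actually obtain the dimensional restriction; it merely postulates it. To close the gap you would need to (a) rederive the Palais-Smale condition and obtain the strictly lower critical level, and (b) redo the min-max estimate with the cut-off bubbles and the two-parameter $(\mu,\eps)$ bookkeeping.
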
}	
		
The literature concerning problems with this type of nonlinearities is large and deep in the classical case, see for instance \cite{ABC, AGP1, AGP, ALM, CW, Cing}, among others. In particular, in \cite{AGP} A. Ambrosetti, J. Garc\'ia-Azorero and I. Peral studied \eqref{problem} for $s=1$. There, \textcolor{black}{the existence of solutions} is proved by means of two different techniques: bifurcation and concentration-compactness. In the first case, they construct solutions for the whole range $0<q<2^*_s-1$ as small perturbations of the solutions to the problem
$$-\Delta u = u^{2^*-1}\qquad \mbox{ in }\mathbb{R}^n,\qquad u>0,$$
by using a Lyapunov-Schmidt reduction. \textcolor{black}{
On the other hand, the authors also prove the existence of two solutions for $0<q<1$ (that is, the concave-convex problem) by applying an argument of concentration-compactness type (in the spirit of \cite{lions1, lions2}).}

The fractional counterpart of these results is as follows. In \cite{DMPV}, a solution to \eqref{problem} for $0<q<2^*_s-1$ is obtained by means of a Lyapunov-Schmidt reduction. Indeed, the authors prove the existence of a function $w_\varepsilon$ (which goes to zero \textcolor{black}{in a suitable space with $\varepsilon\to 0$}) so that, for some $\mu\in (0,+\infty)$ and $\xi\in \mathbb{R}^n$, $z_{\mu,\xi}+w_\varepsilon$ solves the problem, where
\begin{equation} \label{zmu} z_{\mu,\xi}(x)=\mu^{\frac{2s-n}{2}}z\left(\frac{x-\xi}{\mu}\right),\qquad z(x)=\frac{c_*}{(1+|x|^2)^{\frac{n-2s}{2}}}\end{equation}
is a solution of
$$(-\Delta)^s u = u^{2^*_s-1}\qquad \mbox{ in }\mathbb{R}^n,\qquad u>0.$$
\textcolor{black}{Moreover, in \cite{maria} for the range $0<q<1$ the authors use the concentration-compactness principle to prove the existence of two solutions for problem \eqref{problem}}
    (see also \cite{bego, SV-2, SV-1} for related problems in the nonlocal case).
\medskip

\textcolor{black}{In this paper, we solve problem \eqref{problem}
in the fractional case $s\in (0,1)$ and in the range $1\leq q<2^*_s-1$, using a concentration-compactness principle. Notice that in our problem the two nonlinearities are convex, and the geometry of the functional suggests the existence of one solution instead of two. In order to prove the existence of a solution we use, roughly speaking, the following strategy:}

(i) To consider the energy functional associated to \eqref{problem} and to prove that it satisfies some compactness condition (Palais-Smale condition) under a certain energy level.

(ii) To build a sequence of functions with an appropriate geometry (of Mountain Pass type) whose energy lies below the critical level found in (i). 

(iii) To apply the Mountain Pass Lemma (see \cite{mps}) to pass to the limit, getting a solution.

There are two fundamental points here: to identify the energy level, and to find the appropriate sequence. \textcolor{black}{We point out that, in the concave-convex (fractional) problem studied in \cite{maria}}, the geometry derived from the concave term (the functional has a minimum of negative energy) helps to prove that the sequence stays below the critical level. However, in our paper both nonlinearities are convex, and the proof gets more involved. Indeed, if one adapts straightforwardly the compactness result in \cite[Proposition 4.2.1]{maria} and builds the sequence in the standard way (by considering the path along the Sobolev minimizers), then the arguments to prove that the energy of the sequence is small enough do not work. 

Thus, the study of \eqref{problem} will first require a finer analysis of the compactness properties of the functional. More precisely, we will have to improve the estimates of the functional in order to get a slightly higher critical level.
Accordingly, once we have found this new critical level, we perform a more careful analysis of the energy of the sequence given by the minimizers. We will finally conclude by applying the Mountain Pass Lemma in the standard way.

\textcolor{black}{We remark here that in this paper we also overcome 
a flaw found in \cite{AGP}, where the classical problem is studied}; indeed, to prove compactness (Proposition 2.1 therein) they state that the critical energy level $c_\varepsilon$ has to satisfy
$$c_\varepsilon<\frac{1}{n}S^{{n}/2}-C\varepsilon^{\frac{2^*}{2^*-(q+1)}}.$$
Nevertheless, if one follows the proof it arises that, in order to reach the contradiction, it has to be required that
$$c_\varepsilon<\frac{1}{{n}}S^{{n}/2}-C\varepsilon^{\frac{2^*}{2^*-(q+1)}}-C\varepsilon.$$
Notice that what we are saying here is that the compactness holds below a lower critical level, and thus it will be more difficult to find the sequence in (ii). This flaw was already fixed in \cite{maria} in the fractional, \textcolor{black}{concave-convex} case (see Proposition 4.2.1), where the authors consider the lower level and find the appropriate sequence. 

\bigskip

\textcolor{black}{We make now some preliminary observations on the problem that we study.} We see at first that if $h$ satisfies conditions \eqref{h1}, then also 
		\bgs{\label{h2} h\in L^m(\Rn) \mbox{ for any } m \in (1,+\infty).}
Furthermore, in the case $n\in (2s,6s)$ we need to ask $h$ to be positive. This restriction arises again from the study of the energy of the Sobolev minimizers. As we commented before, we would like to control the energy of the sequence that we will construct (and that will be based on the functions $z_{\mu,\xi}$, \textcolor{black}{see \eqref{zmu}}), and thus we would like the negative terms to be as large as possible. In particular, if one looks at the $q$-order term, we hope that the part where $h$ is positive \textcolor{black}{dominates over} the part where it is negative. To have this, we will center the function $z_{\mu,\xi}$ in the ball where $h$ is positive, so that the mass is concentrated there. However, it can be easily seen that for low dimensions the mass of the tails of the minimizers is too large and it annihilates the mass in the positive part of $h$. This computation gives an idea of why the necessity of \textcolor{black}{requiring} $h\geq 0$ for $n\in (2s,6s)$, but the detailed restriction can be found in Section \ref{proofThm}.
\medskip

The paper is organized as follows: in Section \ref{strategy} we provide the functional framework that will be needed, as well as some auxiliar results related to compactness and geometry properties. Section \ref{PSC} is devoted to the proof of the Palais-Smale condition for the energy functional, and Section \ref{minmax} to construct the sequence with mountain pass geometry and whose energy level lies below the critical one. Finally, in Section \ref{proofThm} we prove Theorem \ref{thm}.

	\section{Functional framework and preliminary computations}\label{strategy}

\noindent \textcolor{black}{We introduce at first some notations.} Let us denote by $\Rp:=\Rn\times (0,+\infty)$ the $n+1$ dimensional half-space, by $X=(x,y)\in \Rp$ a $n+1$ dimensional vector, having $x\in \Rn$ and $y>0$, and take $a:=1-2s$.  Moreover, for $x\in \Rn$ and $r>0$ we write $B_r(x)$ \textcolor{black}{(shorted to $B_r$ when $x=0$)} for the ball in $\Rn$ centered at $x$ with radius $r$, i.e.
	  \[ B_r(x):= \{ x'\in \Rn \mbox{ s.t. } |x-x'|<r\},\]
	  and for $X\in \Rp$ and $r>0$ we write $B_r^+(X)$ for the ball in $\Rp$ centered at $X$ with radius $r$, that is
	  \[    B_r^+(X):= \{ X'\in \Rp \mbox{ s.t. } |X-X'|<r\}.\]
Let us introduce first the seminorm
		\[ [u]^2_{\dot H^s(\Rn)}: = \iint_{\R^{2n}} \frac{|u(x)-u(y)|^2}{|x-y|^{n+2s}}\, dx\, dy,\] 
and define  the space $\dot H^s(\Rn)$ as the completion of the Schwartz space of rapidly decreasing smooth functions, with respect to the norm $[\,\cdot\,]_{\dot H^s(\Rn)} + \|\cdot\|_{L^{2^*_s}(\mathbb{R}^n)}$. 
	
\begin{defn}
We say that $u\in \dot H^s(\Rn)$ is a (weak) solution of $(-\Delta)^s u=f$ in $\Rn$ for a given $f\in L^\beta(\Rn)$ where $\beta:=2n/(n+2s)$ if
	\[ \frac{C(n,s)}{2} \iint_{\R^{2n}} \frac{ \left(u(x)-u(y)\right)\left(\varphi(x)-\varphi(y)\right)}{|x-y|^{n+2s}} \, dx \, dy = \int_{\Rn} f(x)\varphi(x)\, dx\] for every $\varphi \in \dot H^s(\Rn)$.
\end{defn}

\noindent Nevertheless, instead of directly working in this framework, we will transform the problem into a local one by using the extension due to L. Caffarelli and L. Silvestre (see \cite{extension}). 

Thus, the operator $(-\Delta)^s$ can be obtained as the trace of a local (possibly singular and degenerate) operator acting on the half space.  Given $U\colon \Rp\to\R$ that satisfies
		\syslab{ \label{extprc}&\mbox{div} (y^a \nabla U)= 0 & &\mbox{ in } \Rp,\\
			&	U(x,0)=u(x) &&\mbox{ in } \Rn,}
			it holds that, up to constants,
			\[ (-\Delta)^s u (x)= -\lim_{y \to 0^+} y^a \partial_y U(x,y).\]
			Let \[ [U]_a^*:=\left( \kappa_s\int_{\R^{n+1}} y^a |\nabla U|^2 \, dX\right)^{1/2} ,\] where $\kappa_s$ is a normalization constant. We then define the spaces
			\[ \dot H_a^s (\R^{n+1}) := \overline{ C_0^\infty(\R^{n+1})}^{[\cdot]_a^*}\]
			and
			\[ \Hsa := \Big\{ U:=\tilde U\Big|_{\Rp} \mbox{ s.t. }\tilde U \in \dot H^s_a (\R^{n+1}), \tilde U(x,y)=\tilde U(x,-y) \mbox{ a.e. in } \Rn \times \R \Big\}.\]
The norm in $\Hsa$ is, neglecting the constants,
	\bgs{\label{norm} [U]_a:= \lr{\int_{\R^{n+1}_+} y^a|\nabla U|^2 \, dX }^{1/2}.} 
	        
\noindent So, finding a solution $u\in \dot H^s(\Rn)$ of the nonlocal problem $(-\Delta)^s u=f(u)$ is equivalent to finding a solution $U\in \Hsa$ of the local problem
\sys{&\mbox{div} \left(y^a\nabla U\right) =0 && \mbox{ in } \Rp,\\
& -\lim_{y\to 0^+} y^a \partial_y U = f(u) && \mbox{ in } \Rn .}
Since we are looking for positive solutions of \eqref{problem}, we will consider the problem
\eqlab{\label{eqpls} (-\Delta)^s u =\varepsilon h u _+^q + u_+^{\cx-1} \quad \mbox{ in } \Rn}
and (according to the considerations above) its equivalent formulation 
\syslab{\label{equiveqplus} &\mbox{div} \left(y^a\nabla U\right) =0 && \mbox{ in } \Rp,\\
& -\lim_{y\to 0^+} y^a \partial_y U (x,y)= \varepsilon h U _+^q(x,0) + U_+^{\cx-1}(x,0) && \mbox{ in } \Rn.}
In particular, we say that $U\in \Hsa$ is a (weak) solution on the problem \eqref{equiveqplus} if
\bgs{\label{weaksolU} \int_{\Rp} y^a\langle \nabla U,\nabla \varphi\rangle \, dX =\int_{\Rn} \lr{ \varepsilon h(x) U_+^q(x,0) +U_+^{\cx-1}(x,0)} \varphi(x,0)\, dx,}
for every $\varphi \in \Hsa$. Furthermore, the energy functional associated to the problem \eqref{equiveqplus} is 
\bgs{ \label{oper} \Fl (U):=\frac{1}2 \int_{\Rp}y^a |\nabla U|^2 \, dX - \frac{\varepsilon}{q+1} \int_{\Rn} h(x) U_+^{q+1} (x,0)\, dx -\frac{1}{\cx}\int_{\Rn} U_+^{\cx}(x,0)\, dx.} 
In particular $\Fl \in C^1(\Hsa)$ and for any $U,V\in \Hsa$
\bgs{\label{operderiv}&  \langle \Fl'(U),V\rangle\\ =&\; \int_{\Rp} y^a\langle \nabla U,\nabla V\rangle \, dX - \varepsilon \int_{\Rn} h(x) U_+^q (x,0)V(x,0) - \int_{\Rn} U_+^{\cx-1}(x,0) V(x,0)\, dx.}
The purpose of the paper from here on is to prove the existence of a critical point $U$ of the operator $\Fl$. Then, $U$ is a solution of \eqref{equiveqplus} and therefore $u:=U(\cdot, 0)$ is a solution of \eqref{eqpls}. Moreover, one can prove that any nontrivial solution $u$ of \eqref{eqpls} (hence its extension $U$) is nonnegative, and therefore a true solution of \eqref{problem} (see for this \cite[Proposition 2.2.3]{maria}).
\medskip

It is known that (up to constants) the harmonic extension of the fractional Laplacian gives an isometry between $\dot H^s(\Rn)$ and $\Hsa$, i.e.
	\eqlab{\label{isom}    [u]_{\dot H^s(\Rn)} = [U]_a.}	We recall that the Sobolev embedding in $\dot H^s(\Rn)$ gives that
	\[ S\|u\|_{\Lst}^2 \leq [u]^2_{\dot H^s(\Rn)},\] where $S$ is the best constant of the Sobolev embedding of $\dot H^s(\Rn)$ (see for instance \cite[Theorem 6.5]{hitch}). As a consequence, we have the following inequality,
	\begin{prop}[Trace inequality] \label{traceIneq}
Let $U \in \Hsa$. Then
\bgs{ S\|U(\cdot, 0)\|_{\Lst}^{\textcolor{black}{2}}\leq [U]_a^2.}
\end{prop}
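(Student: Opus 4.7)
The statement is essentially a direct composition of two facts that are already recalled in the paragraph immediately preceding it, so my plan is simply to chain them together. Given $U \in \dot H_a^s(\R^{n+1}_+)$, I set $u := U(\cdot,0)$ and observe that $u$ lies in the trace space $\dot H^s(\R^n)$ because the extension/trace correspondence of Caffarelli–Silvestre provides (up to the normalization constant $\kappa_s$) the isometry \eqref{isom}, that is, $[u]_{\dot H^s(\R^n)} = [U]_a$. Applying the Sobolev embedding of $\dot H^s(\R^n)$ into $L^{2_s^*}(\R^n)$, which yields $S\|u\|_{L^{2_s^*}(\R^n)}^2 \leq [u]_{\dot H^s(\R^n)}^2$, and then substituting the isometry, I get
\[
S\|U(\cdot,0)\|_{L^{2_s^*}(\R^n)}^2 \;=\; S\|u\|_{L^{2_s^*}(\R^n)}^2 \;\leq\; [u]_{\dot H^s(\R^n)}^2 \;=\; [U]_a^2,
\]
which is the claimed inequality.

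The only genuine content of the argument is therefore making sure the trace $U(\cdot,0)$ is well defined as an element of $\dot H^s(\R^n)$ for a generic $U$ in the closure $\dot H_a^s(\R^{n+1}_+)$. I would address this by working first on smooth compactly supported symmetric functions (for which the boundary trace is obvious and the isometry is the classical identity of Caffarelli–Silvestre relating the $y^{1-2s}$-weighted Dirichlet energy to the $\dot H^s$ seminorm) and then extending by density, using precisely that isometry to guarantee that Cauchy sequences in $\dot H_a^s$ produce Cauchy sequences of traces in $\dot H^s(\R^n)$. Once this is in place, Proposition~\ref{traceIneq} is immediate from the two stated facts, and there is no additional obstacle.
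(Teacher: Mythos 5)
The paper does not actually prove Proposition~\ref{traceIneq}; it presents it as ``a consequence'' of the isometry~\eqref{isom} and the fractional Sobolev inequality, which is exactly what you do, so you are following the intended route. One point is worth tightening, though: the middle step $[u]_{\dot H^s(\Rn)}^2 = [U]_a^2$ in your chain is an equality only when $U$ is the harmonic ($y^a$-divergence-free) extension of its trace $u$, whereas the proposition is stated for an arbitrary $U \in \Hsa$, which need not be harmonic. For a general $U$ with trace $u$, the correct relation is $[u]_{\dot H^s(\Rn)} \leq [U]_a$, which follows because the Caffarelli--Silvestre extension minimizes the weighted Dirichlet energy $\int y^a|\nabla \cdot|^2$ among all functions in $\Hsa$ with prescribed trace $u$, and on the minimizer it equals $[u]^2_{\dot H^s(\Rn)}$ by \eqref{isom}. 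The inequality goes in the right direction, so your conclusion $S\|U(\cdot,0)\|_{\Lst}^{2}\leq [U]_a^2$ is unaffected, but the step should be written as $[u]_{\dot H^s(\Rn)}^2 \leq [U]_a^2$ with the minimality of the harmonic extension invoked explicitly. The paper's own phrasing of \eqref{isom} is equally loose on this point, so this is a clarification rather than a genuine gap; your density argument for the well-definedness of the trace is appropriate and consistent with how $\Hsa$ and $\dot H^s(\Rn)$ are defined as completions.
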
			
\noindent In \cite[Theorem 1.1]{costi} the best Sobolev constant and the fractional Sobolev minimizers are explicitly computed. The form of the fractional Sobolev minimizer is given by
	\eqlab{\label{SobMin}  z(x):= \frac{c_\star}{(1+|x|^2)^{\frac{n-2s}{2}}}} for a positive constant $c_\star=c_\star(n,s)$. 

We introduce for $r\in(1,+\infty)$ the weighted Lebesgue space endowed with the norm
\[\|U\|_{L^r(\Rp,y^a)} :=\lr{\int_{\Rp}y^a |U|^r\, dX}^{1/r}.\] The following result gives a continuous Sobolev embedding of the space $\Hsa$ into the weighted Lebesgue space for a particular value of $r$. See for the proof \cite[Proposition 3.1.1]{maria}.

\begin{prop}[Sobolev embedding] \label{sob}There exists a constant $\widehat S>0$ such that for all $U\in \Hsa$ it holds that
\bgs{ \lr{ \int_{\Rp} y^a|U|^{2\gamma} \, dX}^{1/2\gamma} \leq \widehat S \lr{ \int_{\Rp} y^a |\nabla U|^2 \, dX}^{1/2},}
where $\gamma=1+2/(n-2s)$. 
\end{prop}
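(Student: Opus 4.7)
The starting observation is that the target exponent $2\gamma$ is precisely the critical Sobolev exponent intrinsically associated to the weighted measure $y^a\,dX$ on $\Rp$: the ``homogeneous dimension'' induced by this measure is $d:=n+1+a=n+2-2s$, since rescaling $X\mapsto \lambda X$ multiplies $y^a\,dX$ by $\lambda^{n+1+a}$. The usual Sobolev identity then gives
$$\frac{2d}{d-2}=\frac{2(n+2-2s)}{n-2s}=2+\frac{4}{n-2s}=2\gamma.$$
Moreover, since $a=1-2s\in(-1,1)$ for every $s\in(0,1)$, the symmetric weight $|y|^a$ on $\R^{n+1}$ belongs to the Muckenhoupt class $A_2(\R^{n+1})$; this is the structural fact that enables an $L^2$-based weighted Sobolev inequality of the standard Euclidean shape.

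The plan is then as follows. By the very definition of $\Hsa$ as the restriction to $\Rp$ of the $y$-even functions in $\dot H_a^s(\R^{n+1})$, and by the density of $C_c^\infty(\R^{n+1})$ in the latter space with respect to $[\,\cdot\,]_a^*$, it is enough to prove the inequality for $y$-even representatives $\tilde U\in C_c^\infty(\R^{n+1})$ and then pass to the limit. For such a $\tilde U$ I would invoke the classical weighted Sobolev embedding for $A_2$ weights (in the form proved by Fabes, Kenig and Serapioni for second-order degenerate elliptic operators), which yields a constant $C=C(n,s)>0$ such that
$$\left(\int_{\R^{n+1}} |y|^a |\tilde U|^{2\gamma}\,dX\right)^{1/(2\gamma)}\leq C \left(\int_{\R^{n+1}} |y|^a |\nabla \tilde U|^2\,dX\right)^{1/2}.$$
Splitting each integral along $\{y>0\}$ and $\{y<0\}$ and using the evenness of $\tilde U$ to double the two halves symmetrically gives exactly the claimed bound on $\Rp$ with $\widehat S=2^{1/2-1/(2\gamma)}\,C$.

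The main obstacle is therefore not an algebraic exponent match (which is automatic once one identifies the effective dimension $d$) but the appeal to the weighted Sobolev inequality for the $A_2$ weight $|y|^a$. If one prefers to remain self-contained and avoid Muckenhoupt theory, an alternative route is to combine the isometry \eqref{isom} with Proposition \ref{traceIneq}: representing $U$ as the $s$-harmonic Caffarelli-Silvestre extension of $u=U(\cdot,0)$ through the Poisson kernel, one can estimate the slicewise norms $\|U(\cdot,y)\|_{L^{2^*_s}(\Rn)}$ in terms of $[U]_a$ uniformly in $y>0$, and then reconstruct the weighted $L^{2\gamma}(\Rp,y^a)$-norm by interpolating these slicewise bounds against $y^a$ with the help of a Hardy-type inequality in the $y$-variable. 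This second approach is more explicit but computationally heavier; in practice I would carry out the short Muckenhoupt proof sketched above.
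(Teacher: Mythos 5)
The paper does not give its own proof of this proposition; it delegates entirely to \cite[Proposition 3.1.1]{maria}. Your primary argument --- identifying the homogeneous dimension $d=n+1+a=n+2-2s$ so that $2\gamma=2d/(d-2)$, invoking the weighted Sobolev inequality for the $A_2$ weight $|y|^a$ on $\R^{n+1}$, and passing to $\Rp$ by evenness and to $\Hsa$ by density --- is the standard way this embedding is established, and the arithmetic (including the factor $2^{1/2-1/(2\gamma)}$ relating $\widehat S$ to the full-space constant) is correct. This is essentially the same route the cited reference takes.

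Two cautionary remarks. First, the citation of Fabes--Kenig--Serapioni should be made with some care: their theorems deliver \emph{local} Sobolev--Poincar\'e inequalities for general $A_2$ weights, in which the exponent $2k>2$ is not explicit and cannot be $2\gamma$ for an arbitrary $A_2$ weight. What pins down the sharp exponent $2\gamma$ here is precisely the one-parameter scaling homogeneity of $|y|^a$ that you use to discover it; combining that scaling with the FKS local estimate (or citing a global version of the inequality for the specific power weight, as in Nekvinda or the Caffarelli--Kohn--Nirenberg circle of results) closes the gap. You implicitly do this, but the step ``$A_2$ implies the global inequality at exponent $2\gamma$'' as written is not literally a theorem of FKS.

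Second, the alternative route you sketch at the end does not work as stated. A generic $U\in\Hsa$ is \emph{not} the $s$-harmonic Caffarelli--Silvestre extension of its trace $U(\cdot,0)$ (the extension is only the energy minimizer with that trace), so one cannot represent $U$ through the Poisson kernel. Even if one replaces $U$ by the extension $E[U(\cdot,0)]$, this lowers $[\,\cdot\,]_a$ but gives no control of the $L^{2\gamma}(\Rp,y^a)$-norm of the original $U$, which is what you need to bound. The exponents also do not match on slices ($2^*_s=\tfrac{2n}{n-2s}<2\gamma$), so an interpolation in $y$ would be essential and nontrivial; as you anticipated, it is the Muckenhoupt route, not this one, that one should carry out.
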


\noindent In the next proposition, we prove a useful integral inequality that will be frequently used.
\begin{prop} \label{qineq} Let $1\leq q <\cx-1$. Assume $u\in L^{2^*_s}(\mathbb{R}^n)$  and $h\in L^m(\mathbb{R}^n)$ with $m=\frac{2^*_s}{2^*_s-(q+1)}$. Then,
\bgs{\label{bound3} \left| \int_{\Rn} h(x) u^{q+1}(x) \, dx\right| \leq \|h\|_{L^m(\Rn)} \|u\|^{q+1}.}
\end{prop}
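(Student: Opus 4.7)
The plan is to apply H\"older's inequality with a suitable choice of conjugate exponents determined by the integrability of $u$. Since $u\in L^{\cx}(\Rn)$, the natural power of $u$ to put in H\"older is $u^{q+1}$, which lies in $L^{\cx/(q+1)}(\Rn)$; the hypothesis $q<\cx-1$ ensures $q+1<\cx$, so this exponent is strictly greater than $1$ and we may speak of its H\"older conjugate.

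More precisely, I would set $p_{2}:=\cx/(q+1)$ and compute its conjugate
\[
p_{1}=\frac{p_{2}}{p_{2}-1}=\frac{\cx}{\cx-(q+1)}=m,
\]
which is exactly the exponent prescribed for $h$ in the statement, and which is well defined and positive precisely because $q+1<\cx$. Then H\"older's inequality gives
\[
\left|\int_{\Rn}h(x)u^{q+1}(x)\,dx\right|\le \int_{\Rn}|h(x)|\,|u(x)|^{q+1}\,dx\le \|h\|_{L^{m}(\Rn)}\,\bigl\||u|^{q+1}\bigr\|_{L^{p_{2}}(\Rn)},
\]
and the last factor equals $\|u\|_{L^{\cx}(\Rn)}^{q+1}$ by the very definition of $p_{2}$, yielding the claimed bound (with the unspecified norm on the right-hand side interpreted as $\|\cdot\|_{L^{\cx}(\Rn)}$, as is consistent with the rest of the paper).

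There is essentially no obstacle: the argument is a one-line application of H\"older, and the only point worth emphasising is that the strict inequality $q<\cx-1$ is used to guarantee that $m$ is finite (so that $h\in L^{m}(\Rn)$ is a meaningful hypothesis, consistent with the integrability of $h$ already noted in \eqref{h1} and the remark after it). The endpoint case $q=\cx-1$ is excluded precisely because there $m=+\infty$ and one would need $h\in L^{\infty}(\Rn)$ alone, which the paper's range of parameters avoids.
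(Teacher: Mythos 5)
Your proof is correct and coincides with the paper's own argument: both apply H\"older's inequality with the conjugate pair $m=\frac{\cx}{\cx-(q+1)}$ and $\frac{\cx}{q+1}$, using $q+1<\cx$ to ensure these exponents are admissible. Nothing more to add.
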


\begin{proof} We use the Hölder inequality to deduce that
	\bgs{ \left| \int_{\Rn} h(x) u^{q+1} (x) \, dx \right| \leq \al \int_{\Rn} |h(x)| u^{q+1} (x) \, dx\\
	\leq \al   \lrq{\int_{\Rn}|h|^{\frac{\cx}{\cx-q-1} } \, dx }^{\frac{\cx-q-1}{\cx}} \lrq{ \int_{\Rn} |u| ^{\cx} \, dx }^{\frac{q+1}{\cx}}\\
	\leq \al \|h\|_{L^m(\Rn)} \|u\|_{\Lst}^{q+1}  } for $m= \frac{\cx}{\cx-q-1}>1$, and so the inequality is proved. 
\end{proof}

\noindent The next proposition is the equivalent of \cite[Lemma 4.1.1]{maria} in the case $q\geq1$ and goes as follows.
\begin{prop}\label{convres}
Let $v_k \in L^{\cx}(\Rn,[0,+\infty))$ be a sequence converging to some $v$ in $L^{\cx}(\Rn)$. Then for any $r>1$
\[ \lim_{k \to +\infty} \int_{\Rn} |v_k^r(x) -v^r(x)|^{\frac{\cx}{r}} \, dx =0.\]
\end{prop}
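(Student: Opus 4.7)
My plan is to reduce the statement to a pointwise algebraic inequality for the map $t\mapsto t^r$ and then apply Hölder's inequality to split the resulting integral into one factor that vanishes (because $v_k\to v$ in $L^{\cx}$) and one factor that stays bounded (because convergent sequences are bounded in norm).

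The starting point is the elementary inequality valid for nonnegative reals $a,b$ and $r>1$,
\[
|a^r-b^r|\le C_r\,(a^{r-1}+b^{r-1})\,|a-b|,
\]
which follows from the mean value theorem applied to $t\mapsto t^r$ together with the trivial bound $\max(a,b)^{r-1}\le C_r(a^{r-1}+b^{r-1})$. Raising to the power $\cx/r$ and integrating, one obtains
\[
\int_{\Rn}|v_k^r-v^r|^{\cx/r}\,dx\le C\int_{\Rn}\bigl(v_k^{r-1}+v^{r-1}\bigr)^{\cx/r}|v_k-v|^{\cx/r}\,dx.
\]

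To the right-hand side I would apply Hölder's inequality with the conjugate exponents $\alpha=r/(r-1)$ and $\beta=r$ (which indeed satisfy $1/\alpha+1/\beta=1$ and are well defined since $r>1$). This yields
\[
\int_{\Rn}|v_k^r-v^r|^{\cx/r}\,dx\le C\left(\int_{\Rn}\bigl(v_k^{r-1}+v^{r-1}\bigr)^{\cx/(r-1)}\,dx\right)^{(r-1)/r}\!\!\left(\int_{\Rn}|v_k-v|^{\cx}\,dx\right)^{1/r}.
\]
Using the elementary bound $(A+B)^{\gamma}\le C_{\gamma}(A^{\gamma}+B^{\gamma})$ with $\gamma=\cx/(r-1)$, the first factor is controlled by a constant times $\|v_k\|_{L^{\cx}}^{\cx(r-1)/r}+\|v\|_{L^{\cx}}^{\cx(r-1)/r}$, and this stays bounded since $L^{\cx}$-convergence implies norm boundedness. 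The second factor tends to zero by hypothesis, and the claim follows.

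The main (mild) obstacle is making sure the Hölder exponents and the algebraic bounds are uniform in $r>1$; in particular one has to notice that the constant in the bound $(A+B)^\gamma\le C_\gamma(A^\gamma+B^\gamma)$ depends on whether $\gamma\ge 1$ or $\gamma<1$, but in either case such a constant exists, so the argument goes through uniformly. No restriction between $r$ and $\cx$ is needed beyond $r>1$, because Hölder is applied on the powers of $v_k-v$ and $v_k^{r-1}+v^{r-1}$ separately and both feed back to $L^{\cx}$ after raising to the appropriate exponent.
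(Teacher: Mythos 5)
Your proof is correct and takes essentially the same route as the paper's: the mean-value-theorem pointwise bound $|a^r-b^r|\le C_r(a^{r-1}+b^{r-1})|a-b|$, then H\"older with conjugate exponents $r/(r-1)$ and $r$, then boundedness of the first factor and convergence to zero of the second. The paper streamlines the bookkeeping slightly by absorbing $a^{r-1}+b^{r-1}$ into $(a+b)^{r-1}$ before applying H\"older, so that the bounded factor appears directly as a power of $\|v_k+v\|_{\Lst}$ without needing the extra inequality $(A+B)^\gamma\le C_\gamma(A^\gamma+B^\gamma)$.
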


\begin{proof}
For any $a\geq b\geq 0$ and any $r>1$ we see that
\[ a^r-b^r =r\int_{b}^{a} t^{r-1}\, dt \leq r a^{r-1} (a-b) \leq r(a^{r-1}+b^{r-1})(a-b).\]
Exchanging $a$ with $b$, we conclude that
\[ \big| a^r-b^r \big|\leq r (a+b)^{r-1}|a-b|.\]
Then by the Hölder inequality we have that
\bgs{\int_{\Rn} |v_k^r(x)-v^r(x)|^{\frac{\cx}{r}}  \, dx \leq &\; r^{\frac{\cx}{r}} \lr{\int_{\Rn} (v_k+v)^{\cx }\, dx} ^{(r-1)/r} \lr{ \int_{\Rn} |v_k-v|^{\cx}\, dx}^{1/r} \\
\leq &\; r^{\frac{\cx}{r}} \
\|v_k+v\|_{\Lst}^{\frac{(r-1) \cx}{r}} \|v_k-v\|_{\Lst}^{\frac{\cx}{r}}   .} 
Using the convergence $\|v_k-v\|_{\Lst}\to 0$ (from which it also follows that $\|v_k+v\|_{\Lst}$ is uniformly bounded), the conclusion plainly follows.
\end{proof}

\noindent Another useful result is given in \cite[Lemma 4.2.4]{maria}. We just notice that now, for $q>1$, the statement goes as follows:
\begin{prop}\label{proppqs}
Let $m:= \frac{\cx}{\cx-(q+1)}$. Then there exists a positive constant $\bar C$ depending on $n,s,q$ and $\|h\|_{L^m(\Rn)}$ such that, for any $\alpha>0$,
\bgs{ \frac{s}n \alpha^{\cx} -\eps\lr{\frac{1}2-\frac{1}{q+1}}\|h\|_{L^m(\Rn)}\alpha^{q+1}\geq -\bar C\eps^{\frac{\cx}{\cx-(q+1)}}.}
\end{prop}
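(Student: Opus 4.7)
The inequality is a purely one-variable elementary calculus bound: set
\[ A:=\frac{s}{n},\qquad c_q:=\frac{1}{2}-\frac{1}{q+1},\qquad B:=\eps\, c_q\, \|h\|_{L^m(\Rn)}, \]
and study the function $g(\alpha):=A\alpha^{\cx}-B\alpha^{q+1}$ on $[0,+\infty)$. Since $q\in[1,\cx-1)$, we have $c_q\geq 0$ (with equality only when $q=1$) and $\cx>q+1$. In the degenerate case $q=1$ the second term vanishes identically and the inequality holds trivially with $\bar C=0$, so from here on I would assume $q>1$, in which case $B>0$.

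The plan is to compute the global minimum of $g$ on $(0,+\infty)$. Because $g(0)=0$, $g(\alpha)\to+\infty$ as $\alpha\to+\infty$, and $g(\alpha)<0$ for small $\alpha>0$ (as $q+1<\cx$), the minimum is attained at an interior critical point. Solving $g'(\alpha)=\cx A\alpha^{\cx-1}-(q+1)B\alpha^q=0$ yields the unique positive critical point
\[ \alpha_*=\left(\frac{(q+1)B}{\cx A}\right)^{\frac{1}{\cx-q-1}}. \]
Substituting back and using $\cx A\alpha_*^{\cx-q-1}=(q+1)B$, an algebraic simplification gives
\[ g(\alpha_*)=-\frac{\cx-(q+1)}{\cx}\,B\,\alpha_*^{q+1}=-\kappa_{n,s,q}\, B^{\frac{\cx}{\cx-(q+1)}} A^{-\frac{q+1}{\cx-(q+1)}}, \]
where $\kappa_{n,s,q}>0$ depends only on $n,s,q$. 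Finally, since $B=\eps\, c_q\|h\|_{L^m(\Rn)}$, we obtain
\[ g(\alpha)\geq g(\alpha_*)=-\bar C\,\eps^{\frac{\cx}{\cx-(q+1)}}\qquad\text{for every }\alpha>0, \]
with $\bar C=\kappa_{n,s,q}\,(c_q\|h\|_{L^m(\Rn)})^{\frac{\cx}{\cx-(q+1)}} A^{-\frac{q+1}{\cx-(q+1)}}$, which is the constant in the statement.

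There is no real obstacle here: everything reduces to identifying the critical point of a binomial of the form $A\alpha^p-B\alpha^r$ with $p>r>0$, and tracking how the minimum scales in the coefficient $B$ (hence in $\eps$). The only mild point is to keep track of the exponents so that the power of $\eps$ comes out exactly as $\cx/(\cx-(q+1))$, which follows from $B\alpha_*^{q+1}\propto B^{1+\frac{q+1}{\cx-q-1}}=B^{\frac{\cx}{\cx-(q+1)}}$.
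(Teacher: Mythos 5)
Your proof is correct and is the natural elementary argument; note that the paper itself does not supply a proof for this proposition but simply refers to \cite[Lemma 4.2.4]{maria} (where the analogous bound is established for the concave case $0<q<1$, with the opposite sign on $\tfrac12-\tfrac1{q+1}$). Your one-variable minimization of $g(\alpha)=A\alpha^{\cx}-B\alpha^{q+1}$ with $B=\eps\bigl(\tfrac12-\tfrac1{q+1}\bigr)\|h\|_{L^m(\Rn)}$, locating the unique interior critical point $\alpha_*$ and tracking the scaling $B\alpha_*^{q+1}\propto B^{\cx/(\cx-(q+1))}$, is exactly what the cited lemma does, and your explicit handling of the degenerate case $q=1$ (where $B=0$ and the bound is trivial with $\bar C=0$) is a welcome clarification that the paper leaves implicit.
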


\section{Palais-Smale condition}\label{PSC}
\noindent The main result of this Section is the following.
\begin{teo}\label{PSCthm} 
There exists~$\bar C, c_1>0$, depending on~$h, q, n$
and~$s$, such that the following statement
holds true. 

Let $\{U_k\}_{k\in \N}\subset \Hsa$ be a sequence satisfying
\begin{enumerate}
\item[(i)]$\displaystyle\lim_{k\to+\infty}\mathcal{F}_\varepsilon(U_k)= 
c_\varepsilon$, with 
\begin{equation*}\begin{split}\label{ceps}
&c_\varepsilon+c_1\varepsilon^{1+\delta} +\overline C \eps^{\frac{\cx}{\cx-(q+1)}}<
\dfrac{s}{n}S^{\frac{n}{2s}}\qquad  \hbox{ if }n\geq 6s,\\
&c_\varepsilon+c_1\varepsilon^{1+\delta}<
\dfrac{s}{n}S^{\frac{n}{2s}} \qquad \hbox{ if }n\in (2s,6s),
\end{split}\end{equation*} 
where $\delta>0$ and $S$ is the Sobolev constant appearing in Proposition~\ref{traceIneq},
\item[(ii)]$\displaystyle\lim_{k\to+\infty}\mathcal{F}'_\varepsilon(U_k)= 0.$
\end{enumerate}
Then there exists a subsequence, still denoted by~$\{U_k\}_{k\in\mathbb{N}}$, 
which is strongly convergent in $\dot{H}^s_a(\mathbb{R}^{n+1}_+)$ as~$k\to+\infty$.
\end{teo}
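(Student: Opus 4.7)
The strategy is a concentration--compactness argument of Brezis--Lieb type, performed in the harmonic-extension framework. First I would test a linear combination $\Fl(U_k)-\tfrac{1}{\theta}\langle \Fl'(U_k),U_k\rangle$ for some $\theta\in(q+1,\cx)$, in which the coefficients of $[U_k]_a^2$ and $\|(U_k)_+(\cdot,0)\|_{\Lst}^{\cx}$ are both strictly positive while the critical exponent term cancels. Combining this with (i), (ii), Propositions \ref{qineq} and \ref{traceIneq}, and a Young-type absorption (possible since $q+1<\cx$) yields boundedness of $[U_k]_a$. Reflexivity of $\Hsa$ together with the embeddings of Propositions \ref{traceIneq} and \ref{sob} produces a subsequence along which $U_k\rightharpoonup U_\infty$ weakly in $\Hsa$, $U_k(\cdot,0)\to U_\infty(\cdot,0)$ in $L^p_{\mathrm{loc}}(\Rn)$ for every $p<\cx$ and almost everywhere, and the analogous local weighted convergence holds in $\Rp$. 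Passing to the limit in $\Fl'(U_k)\to 0$ against $\varphi\in C_c^\infty$ then shows that $U_\infty$ is itself a weak solution of \eqref{equiveqplus}; in particular $\langle \Fl'(U_\infty),U_\infty\rangle=0$.

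Setting $V_k:=U_k-U_\infty$, weak convergence in the Hilbert space $\Hsa$ gives $[U_k]_a^2=[U_\infty]_a^2+[V_k]_a^2+o(1)$; the Brezis--Lieb lemma gives $\|(U_k)_+(\cdot,0)\|_{\Lst}^{\cx}=\|(U_\infty)_+(\cdot,0)\|_{\Lst}^{\cx}+\|(V_k)_+(\cdot,0)\|_{\Lst}^{\cx}+o(1)$; and, using $h\in L^m(\Rn)$, the pointwise convergence and $L^{\cx/(q+1)}$-boundedness of $(U_k)_+^{q+1}$, and Proposition \ref{qineq}, one obtains $\int_{\Rn}h(U_k)_+^{q+1}\,dx\to \int_{\Rn}h(U_\infty)_+^{q+1}\,dx$. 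Subtracting the critical-point identity for $U_\infty$ from $\langle \Fl'(U_k),U_k\rangle=o(1)$, and decomposing the energy in the same way, yields the two fundamental identities
\[
[V_k]_a^2-\|(V_k)_+(\cdot,0)\|_{\Lst}^{\cx}=o(1),\qquad c_\eps=\Fl(U_\infty)+\tfrac{s}{n}[V_k]_a^2+o(1).
\]

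The standard Sobolev dichotomy now closes the argument. If $b^2:=\lim[V_k]_a^2>0$, the first identity together with Proposition \ref{traceIneq} forces $b^2\geq S^{n/(2s)}$, so that $c_\eps\geq \Fl(U_\infty)+\tfrac{s}{n}S^{n/(2s)}$. A lower bound on $\Fl(U_\infty)$ is obtained by exploiting $\langle \Fl'(U_\infty),U_\infty\rangle=0$ to rewrite
\[
\Fl(U_\infty)=\tfrac{s}{n}\|(U_\infty)_+(\cdot,0)\|_{\Lst}^{\cx}+\eps\left(\tfrac12-\tfrac{1}{q+1}\right)\int_{\Rn}h(U_\infty)_+^{q+1}\,dx.
\]
For $n\geq 6s$, where $h$ may change sign, Proposition \ref{qineq} bounds the last integral in absolute value and Proposition \ref{proppqs} applied with $\alpha=\|(U_\infty)_+(\cdot,0)\|_{\Lst}$ gives $\Fl(U_\infty)\geq -\bar C\eps^{\cx/(\cx-(q+1))}$, contradicting (i). For $n\in(2s,6s)$ the hypothesis $h\geq 0$ together with $q\geq 1$ (so that $\tfrac12-\tfrac{1}{q+1}\geq 0$) renders the $\eps$-term nonnegative, hence $\Fl(U_\infty)\geq 0$, contradicting (i) via the $c_1\eps^{1+\delta}$ margin built into the threshold. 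In both cases $b=0$, which is exactly strong convergence $U_k\to U_\infty$ in $\Hsa$.

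The main obstacle is the concentration--compactness bookkeeping in the weighted setting: both Brezis--Lieb splittings must be justified in $\Hsa$, and the strong-convergence statement for the $h$-pairing is the exact point where the integrability $h\in L^m(\Rn)$ is essential. The two $\eps$-corrections in threshold (i) are dictated entirely by the lower bound for $\Fl(U_\infty)$ via Proposition \ref{proppqs}, which also accounts for the dimensional split $n\geq 6s$ versus $n\in(2s,6s)$ and the corresponding need to require $h\geq 0$ in the low-dimensional case.
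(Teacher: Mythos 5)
Your argument is correct, but it takes a genuinely different route from the paper's. The paper establishes compactness in three stages: first a boundedness lemma (Lemma~\ref{Ukisbd}); then a \emph{tightness} lemma (Lemma~\ref{tightness}), proved by a careful contradiction argument with the annular cut-off $\chi$ and the decomposition $U_k=\chi U_k+(1-\chi)U_k$, which is the step that produces the $c_1\eps^{1+\delta}$ term in the threshold (with $\alpha/\gamma=1+\delta$); and finally an application of Lions' concentration--compactness principle (Proposition~\ref{CCP}) to rule out the atomic parts of the defect measures and conclude strong convergence. You instead run the classical Brezis--Nirenberg splitting: extract the weak limit $U_\infty$, show it is a critical point, set $V_k=U_k-U_\infty$, and use Hilbert-space orthogonality plus a (generalized) Brezis--Lieb lemma for $j(t)=(t_+)^{2^*_s}$ together with weak $L^{2^*_s/(q+1)}$-convergence of $(U_k)_+^{q+1}$ against $h\in L^m$ to obtain the two identities $[V_k]_a^2-\|(V_k)_+(\cdot,0)\|_{\Lst}^{\cx}=o(1)$ and $c_\eps=\Fl(U_\infty)+\tfrac{s}{n}[V_k]_a^2+o(1)$; Proposition~\ref{traceIneq} then forces $[V_k]_a^2\to 0$ or $[V_k]_a^2\geq S^{n/2s}+o(1)$, and the lower bound for $\Fl(U_\infty)$ (via Propositions~\ref{qineq} and \ref{proppqs} for $n\geq 6s$, or directly from $h\geq 0$ and $q\geq 1$ for $n\in(2s,6s)$) excludes the second alternative. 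This is cleaner in two respects: it handles the loss of compactness globally in one shot, without separating ``escape to infinity'' from ``concentration at points,'' and it never introduces the $\eps^{\alpha/\gamma}$ cut-off errors, so the contradiction is reached without using the $c_1\eps^{1+\delta}$ margin at all --- in effect you prove the Palais--Smale condition under a slightly weaker hypothesis than stated, which of course still yields the theorem. Two small points to tighten: the phrase ``the critical exponent term cancels'' for $\theta\in(q+1,\cx)$ is a misstatement (with that choice both the gradient and the critical terms keep strictly positive coefficients and the $h$-term is the one to be absorbed; alternatively $\theta=2$, as in the paper, makes the gradient cancel and is simplest); and since $(V_k)_+\neq (U_k)_+-(U_\infty)_+$, the Brezis--Lieb step must invoke the generalized version for the continuous convex integrand $j(t)=(t_+)^{\cx}$, which you implicitly use but should make explicit, verifying the inequality $|j(a+b)-j(a)|\leq \eps|a|^{\cx}+C_\eps|b|^{\cx}$.
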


\noindent \textcolor{black}{Here, the limit in $(ii)$ is to be intended as
\[ \lim_{k\to +\infty}  \|\mathcal F'(U_k) \|_{\mathcal L(E,E)} = \lim_{k\to +\infty} \sup_{V\in E,\|V\|_{E}=1} \left|\scp{ \mathcal F'(U_k),V }\right| = 0, \] where we denote by $\mathcal L(E,E)$ the space of all linear functionals from $E$ to $E$.}
\begin{oss}
As we commented in the introduction, one of the key points in this work is to slightly improve the critical level in such a way that further on we can build a sequence whose energy lies below it. This is precisely the role played by the parameter $\delta$ in the previous theorem. We can not drop this term (that will cause important difficulties) but we can choose $\delta$ large enough so that we can neglect it when $\varepsilon\rightarrow 0$.
\end{oss}

\noindent We recall at first a concentration-compactness principle, stated in \cite[Proposition 3.2.3]{maria} and proved there. This principle is based on the original results by P.L Lions in \cite{lions1,lions2} (in particular  in \cite[Lemma 2.3]{lions2}). For this, we recall the next definitions.
\begin{defn}
A sequence $\{U_k\}_{k\in \N}$ is tight if for every $\mu>0$ there exists $\rho>0$ such that for any $k\in \N$
\[ \int_{\Rp\setminus B_{\rho}^+}y^a |\nabla U_k|^2 \, dX\leq \mu.\]
\end{defn}
\begin{defn}\label{measconv}
Let $\{\mu_k\}_{k\in \N}$ be a sequence of measures on a topological space $X$. We say that $\mu_k $ converges to $\mu$ on $X$ if and only if
\[\lim_{k\to +\infty} \int_X \varphi d\mu_k =\int_X \varphi \, d\mu \quad \mbox{ for any } \varphi \in C_0(X).\]
\end{defn}
\noindent Then the principle goes as follows.
\begin{prop}[Concentration-Compactness Principle]\label{CCP}
Let $\{U_k\}_{k\in \N}$ be a bounded and tight sequence in $\Hsa$ such that $U_k$ converges weakly to $U$ in $\Hsa$. Let $\mu,\nu$ be two nonnegative measures on $\Rp$ respectively $\Rn$ such that (in the sense of Definition \ref{measconv})
\bgs{\label{measconv1} \lim_{k \to +\infty} y^a|\nabla U_k|^2 =\mu }
and
\bgs{\label{measconv2} \lim_{k \to +\infty} | U_k(x,0)|^{2_s^*} =\nu. }
 Then there exists a set $J$ that is at most countable and three families $\{x_j\}_{j\in J}\in \Rn$, $\{\nu_j\}_{j\in J}$ and $\{\mu_j\}_{j\in J}$ with $\nu_j, \mu_j\geq0$ such that
\begin{flalign*}
\mbox{ (i) } &\nu=|U(x,0)|^{2_s^*}+\sum_{j\in J} \nu_j \delta_{x_j} \qquad \qquad \qquad\qquad\qquad\qquad\qquad\qquad\qquad\qquad\qquad\qquad\qquad\qquad\qquad \\
\mbox{ (ii) }& \mu \geq y^a|\nabla U|^2 + \sum_{j\in J}\mu_j \delta_{\{x_j,0\}}\\
\mbox{ (iii) }&\mu_j\geq S\nu_j^{2/2_s^*} \mbox{ for all } j \in J.
\end{flalign*}

\end{prop}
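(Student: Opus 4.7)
The plan is to follow the Lions-type strategy, adapted to the weighted harmonic extension framework. First I reduce to the case of weak convergence to zero by setting $V_k := U_k - U \rightharpoonup 0$ in $\Hsa$. The Brezis--Lieb lemma applied to the traces yields
\[ \lim_{k\to\infty}\int_{\Rn}\Big||U_k(x,0)|^{\cx} - |V_k(x,0)|^{\cx} - |U(x,0)|^{\cx}\Big|\,dx = 0, \]
so $\tilde\nu := \nu - |U(\cdot,0)|^{\cx}$ is a nonnegative measure and equals the weak limit of $|V_k(\cdot,0)|^{\cx}$. An analogous weighted argument for the gradients (together with $V_k\rightharpoonup 0$) yields $\tilde\mu := \mu - y^a|\nabla U|^2 \geq 0$, with $y^a|\nabla V_k|^2 \to \tilde\mu$ in the sense of measures on $\Rp$.

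The core step is to establish the reverse Sobolev inequality
\[ S\left(\int_{\Rn} |\phi(x,0)|^{\cx}\, d\tilde\nu(x)\right)^{2/\cx} \leq \int_{\Rp} \phi^2(X)\, d\tilde\mu(X), \qquad \forall\, \phi \in C_c^\infty(\R^{n+1}). \]
To prove it I apply the trace inequality of Proposition \ref{traceIneq} to $\phi V_k \in \Hsa$, obtaining
\[ S\,\|(\phi V_k)(\cdot,0)\|_{\Lst}^{2} \leq \int_{\Rp} y^a|\nabla(\phi V_k)|^2\, dX. \]
Expanding the right-hand side produces, besides $\int y^a \phi^2 |\nabla V_k|^2\, dX$ (which converges to $\int \phi^2\, d\tilde\mu$), a cross term $2\int y^a\phi V_k\langle\nabla\phi,\nabla V_k\rangle\,dX$ and $\int y^a|\nabla\phi|^2 V_k^2\, dX$. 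Since $y^a$ is a Muckenhoupt $A_2$ weight on $\R^{n+1}$ (because $|a|=|1-2s|<1$), the weighted Rellich--Kondrachov theorem combined with the tightness of $\{U_k\}$ forces $V_k \to 0$ strongly in $L^2(\mathrm{supp}\,\phi, y^a\, dX)$; the two lower-order terms therefore vanish as $k\to\infty$, leaving exactly the claimed inequality.

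With the reverse Sobolev bound in hand, I invoke the abstract lemma of P.~L.~Lions on reverse H\"older bounds between Radon measures (\cite[Lemma I.2]{lions1}): such an inequality forces $\tilde\nu$ to be purely atomic, $\tilde\nu = \sum_{j\in J}\nu_j\delta_{x_j}$ with $J$ at most countable, and a localized test using cutoffs supported near $(x_j,0)$ produces atoms $\mu_j\delta_{(x_j,0)}$ of $\tilde\mu$ satisfying $\mu_j \geq S\nu_j^{2/\cx}$. Substituting back $\nu = |U(\cdot,0)|^{\cx}+\tilde\nu$ and $\mu = y^a|\nabla U|^2+\tilde\mu$ then yields (i)--(iii). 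The main obstacle is the mixed-dimensional nature of the inequality: the atoms of $\tilde\nu$ live on $\Rn$ while those of $\tilde\mu$ a priori live on $\overline{\Rp}$, and one must argue that the latter are actually located on the boundary hyperplane $\{y=0\}$ beneath the former. This uses the fact that any concentration of $y^a|\nabla V_k|^2$ in the open half space $\{y>0\}$ cannot force a reciprocal trace concentration on $\Rn$, by the local compactness of the weighted embedding at positive distance from $\{y=0\}$; hence only the boundary portion of $\tilde\mu$ feeds the reverse inequality, putting all atoms of $\tilde\mu$ at points $(x_j,0)$.
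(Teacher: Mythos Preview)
The paper does not prove this proposition: it is merely quoted as \cite[Proposition~3.2.3]{maria}, and the reader is referred there for the argument. There is therefore no in-paper proof against which to compare your proposal.

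That said, your sketch is precisely the standard Lions-type argument adapted to the weighted extension setting, and is essentially correct. Two small comments. First, the gradient splitting $\tilde\mu=\mu-y^a|\nabla U|^2\ge 0$ is not a Brezis--Lieb statement but follows from weak lower semicontinuity (or, equivalently, from the cross term $\int_{\Rp} y^a\phi\langle\nabla U,\nabla V_k\rangle\,dX\to 0$ via weak convergence); your conclusion is right, only the attribution is off. Second, your closing paragraph overstates the difficulty: since conclusion~(ii) is only an \emph{inequality}, you need not locate all atoms of $\tilde\mu$, but only show that $\tilde\mu(\{(x_j,0)\})\ge S\nu_j^{2/\cx}$ for each atom $x_j$ of $\tilde\nu$. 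This comes directly from the reverse Sobolev inequality by testing with cutoffs $\phi_\rho\in C_c^\infty(\R^{n+1})$ equal to $1$ near $(x_j,0)$ and supported in a ball of radius $\rho$, then sending $\rho\to 0$; any mass of $\tilde\mu$ sitting in the open half-space $\{y>0\}$ is simply irrelevant to the stated conclusion.
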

\noindent We prove that a sequence $\{U_k\}_{k\in\N}$ satisfying the assumptions in Theorem \ref{PSCthm} is bounded. \textcolor{black}{A slighter more general result is given in the following Lemma.}

\begin{lem}\label{Ukisbd}
Let $\varepsilon, \kappa>0$ and let $\{U_k\}_{k\in \N} \subset \Hsa$ be a sequence that satisfies
\eqlab{ \label{bound1} |\Fl (U_k)| + \sup_{V\in \Hsa, \,  [V]=1}| \scp{\Fl' (U_k),V}|\leq \kappa }
for any $k\in \N$. Then there exists $M>0$ such that for any $k\in \N$
	\eqlab{\label{bound}  [U_k]_a\leq M.}
\end{lem}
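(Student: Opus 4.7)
\medskip

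\noindent The plan is the classical Ambrosetti--Rabinowitz-style argument: combine the bound on $\Fl(U_k)$ with a suitable multiple of $\scp{\Fl'(U_k),U_k}$ to cancel the $\eps h$ contribution and produce an expression with only non-negative coefficients in front of $[U_k]_a^2$ and $\int_{\Rn}(U_k)_+^{\cx}(x,0)\,dx$. As a preliminary step, testing the second part of \eqref{bound1} against the normalized direction $V:=U_k/[U_k]_a$ (assuming $U_k\neq 0$, the other case being trivial) gives $|\scp{\Fl'(U_k),U_k}|\le \kappa\, [U_k]_a$.

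\medskip

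\noindent The key algebraic identity to compute is
\[ \Fl(U_k)-\frac{1}{q+1}\scp{\Fl'(U_k),U_k}=\lr{\frac{1}{2}-\frac{1}{q+1}}[U_k]_a^2+\lr{\frac{1}{q+1}-\frac{1}{\cx}}\int_{\Rn}(U_k)_+^{\cx}(x,0)\,dx.\]
The crucial structural fact is that both parenthesized factors are non-negative: the first because $q\ge 1$ forces $q+1\ge 2$, the second because $q+1<\cx$. Combining with \eqref{bound1} and the preliminary step yields
\[\lr{\frac{1}{2}-\frac{1}{q+1}}[U_k]_a^2+\lr{\frac{1}{q+1}-\frac{1}{\cx}}\int_{\Rn}(U_k)_+^{\cx}(x,0)\,dx \le \kappa\lr{1+\frac{[U_k]_a}{q+1}}. \]

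\medskip

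\noindent When $q>1$ the first coefficient is strictly positive, so $[U_k]_a^2\le C(1+[U_k]_a)$ immediately gives \eqref{bound}. The borderline case $q=1$ is the main obstacle, since that coefficient vanishes and the argument collapses to
\[ \int_{\Rn}(U_k)_+^{\cx}(x,0)\,dx\le C(1+[U_k]_a). \]
To recover boundedness here I plan to feed this back into $\Fl(U_k)\le\kappa$, estimating the intermediate term via Proposition \ref{qineq} with $q=1$:
\[ \int_{\Rn}h(x)(U_k)_+^{2}(x,0)\,dx\le \|h\|_{L^{m}(\Rn)}\lr{\int_{\Rn}(U_k)_+^{\cx}(x,0)\,dx}^{2/\cx}\le C(1+[U_k]_a)^{2/\cx}. \]
This produces $\tfrac{1}{2}[U_k]_a^2\le \kappa+C\eps(1+[U_k]_a)^{2/\cx}+C(1+[U_k]_a)$, whose right-hand side grows sublinearly in $[U_k]_a^2$ (since $2/\cx<1$), and \eqref{bound} follows.
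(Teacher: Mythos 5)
Your proof is correct, and it follows a genuinely different route from the paper's. The paper pairs $\Fl(U_k)$ with $\tfrac12\scp{\Fl'(U_k),U_k}$, which kills the gradient term and leaves $\varepsilon\lr{\tfrac12-\tfrac1{q+1}}\int h(U_k)_+^{q+1}+\tfrac{s}{n}\|(U_k)_+\|_{\Lst}^{\cx}$; since $h$ may change sign, the paper then runs a proof by contradiction and invokes a separate $\delta$-trick (comparing $\tau^{q+1}$ with $\tau^{\cx}$ for $\tau$ large) to absorb the $h$-contribution before feeding the resulting bound on $\|(U_k)_+\|_{\Lst}^{\cx}$ back into $\Fl(U_k)\le\kappa$. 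You instead use the Ambrosetti–Rabinowitz multiplier $\tfrac1{q+1}$, which cancels the $\eps h$-term outright and, for $q>1$, immediately delivers $[U_k]_a^2\le C(1+[U_k]_a)$ without any contradiction argument or $\delta$-manipulation — a cleaner and more direct path. Both approaches have a degenerate endpoint: the paper's $\tfrac12$-multiplier annihilates the gradient and forces a secondary step in all cases, whereas your $\tfrac1{q+1}$-multiplier degenerates only at $q=1$, where you then run essentially the same feedback as the paper (bound $\int(U_k)_+^{\cx}$ linearly in $[U_k]_a$, insert into $\Fl(U_k)\le\kappa$ and use Proposition~\ref{qineq} together with $2/\cx<1$). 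In short: same ingredients, but your choice of multiplier streamlines the $q>1$ case.
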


\begin{proof}
We suppose by contradiction that for every $M>0$ there exists $k\in \N$ such that
	\eqlab{ \label{ra}  [U_k]_a>M.}
Thanks to \eqref{bound1} we have that
	\bgs{  \kappa \geq \al \Fl (U_k) = \frac{1}{2} [U_k]_a^2 -\frac{\varepsilon}{q+1} \int_{\Rn} h(x) (U_k)_+ ^{q+1}(x,0)\, dx -\frac{1}{\cx}\int_{\Rn} (U_k)_+^{\cx} (x,0)  \, dx.}
Using also the bound in \eqref{bound3}, we obtain that
	\eqlab{ \label{e1}   \,[U_k]_a^2 \leq \al 2 \kappa + \frac{2\varepsilon}{q+1} \int_{\Rn} h(x) (U_k)_+ ^{q+1}(x,0)\, dx +\frac{2}{\cx}\int_{\Rn} (U_k)_+^{\cx} (x,0) \, dx \\
		\leq \al 2 \kappa + \frac{2 \varepsilon}{q+1} \|h\|_{L^m(\Rn)} \|(U_k)_+\|_{\Lst}^{q+1} +\frac{2}{\cx}\| (U_k)_+\|_{\Lst}^{\cx} .}
Thus, from this and \eqref{ra}, we deduce that also \textcolor{black}{for every $\tilde{M}>0$ one can find $k\in\mathbb{N}$ so that}
	\eqlab{ \label{ra2} \|\ukp\|_{\Lst}>\tilde{M}.} 
Consider now the function $f\colon (0,\infty) \to (0,\infty)$ defined as 
	\[ f(\tau):=\frac{\tau^{q+1}}{\tau^{\cx}}.\] 
Since $q+1<\cx$ we have that
	\[\lim_{\tau \to \infty}f(\tau)=0,\] 
and hence, for any $\delta>0$ there exists $\tau_\delta>0$ such that for every $\tau>\tau_\delta$, one has that $f(\tau)< \delta$. Hence, fixing  $0<\delta<1$, by \eqref{ra2} we can assume
\begin{equation}\label{lowBound1}
\|(U_k)_+\|_{\Lst}>\tau, \qquad \|\ukp\|_{\Lst}^{q+1}\leq \delta\|\ukp\|_{\Lst}^{\cx},\qquad \forall \; \tau>\tau_\delta.
\end{equation}	
Therefore, by Proposition \ref{traceIneq} \textcolor{black}{there exists $k\in \N$ such that }
\begin{equation}\label{lowBound2}
[U_k]_a>\tau S^{1/2}, \qquad \forall \; \tau>\tau_\delta.
\end{equation}
	\textcolor{black}{Using \eqref{lowBound1} and} \eqref{e1} we obtain that
	\eqlab{ \label{bound2}   	[U_k]_a^2  \leq 2\kappa+ \lr{\delta \frac{2\varepsilon}{q+1} \|h\|_{L^m(\Rn)}+ \frac{2}{\cx}  } \|\ukp\|_{\Lst}^{\cx}.}
	
\noindent On the other hand, considering the quotient $U_k / [U_k]_a$ from \eqref{bound1} we get that
	\[ |\scp{\Fl'(U_k),U_k}|\leq \kappa [U_k]_a.\]
	From this and the fact that $|\Fl(U_k)|\leq \kappa$, for $q>1$ we have that
		\eqlab{\label{usel} \kappa (1+[U_k]_a)\geq \al \Fl(U_k) - \frac{1}{2}\scp{\Fl'(U_k),U_k} \\ =\al \varepsilon\lr{\frac{1}{2} -\frac{1}{q+1} }\int_{\Rn} h(x) \ukp^{q+1}(x,0)\, dx + \frac{s}{n} \|\ukp \|_{\Lst}^{\cx},}
		\textcolor{black}{recalling that
		\[ \frac{1}2-\frac{1}{\cx} =\frac{s}n.\]}
Thanks to the bound in \eqref{bound3}, it follows that
	\bgs{ \frac{s}{n} \|\ukp\|_{\Lst}^{\cx} \leq\al   \kappa (1+[U_k]_a) + \varepsilon\lr{\frac{1}{2} -\frac{1}{q+1} } \|h\|_{L^m(\Rn)} \|\ukp\|_{\Lst}^{q+1}.}
We use \eqref{lowBound1} again and we obtain that 
			\bgs{ \frac{s}{n} \|\ukp\|_{\Lst}^{\cx} \leq\al \kappa (1+[U_k]_a) +\delta \varepsilon \lr{\frac{1}{2} -\frac{1}{q+1} } \|h\|_{L^m(\Rn)}  \|\ukp\|_{\Lst}^{\cx}  .}
			Thus
					\bgs{\lrq{ \frac{s}{n}-  \delta \varepsilon \lr{\frac{1}{2} -\frac{1}{q+1} } \|h\|_{L^m(\Rn)}  } \|\ukp\|_{\Lst}^{\cx} \leq\al \kappa (1+[U_k]_a)  ,}
					which for $\delta$ small enough, implies that
						\[ c \|\ukp\|_{\Lst}^{\cx}\leq \kappa(1+[U_k]_a).\] 
Notice that for $q=1$ the inequality above immediately follows from \eqref{usel}. 
						This, together with \eqref{bound2}, yields 
						\[ [U_k]_a^2\leq C_1+ C_2 [U_k]_a \] 
						for suitable positive constants $C_1, C_2$, both independent of $k$. Choosing $\tau$ large enough in \eqref{lowBound2} we contradict this inequality and conclude the proof.
\end{proof}
\noindent Furthermore, a sequence $\{U_k\}_{k\in \N} \subset \Hsa$ that satisfies the hypotheses of Theorem \ref{PSCthm} is tight, as stated in the next Lemma \ref{tightness}. We follow the steps of the proof of Lemma 4.2.5  in \cite{maria} with the needed adjustments, and give here all the details for the sake of completeness.

\begin{lem}\label{tightness} Let $\{U_k\}_{k\in \N} \subset \Hsa$ be a sequence that satisfies the hypothesis of Theorem \ref{PSCthm}. Then for any $\eta>0$ there exists $\rho>0$ such that for any $k\in \N$ it holds that
\[ \int_{\Rp\setminus B_{\rho}^+} y^a|\nabla U_k|^2 \, dX + \int_{\Rn \setminus \{B_\rho \cap\{y=0\}\}} (U_k)^{\cx} (x,0)\, dx<\eta.\] In particular, the sequence $\{U_k\}_{k\in \N}$ is tight.
\end{lem}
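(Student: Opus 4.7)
The strategy is to localize $U_k$ away from the origin via a smooth cutoff and test the Palais-Smale condition against this localization. By Lemma \ref{Ukisbd} the sequence $\{U_k\}$ is bounded in $\Hsa$, so up to a subsequence $U_k \rightharpoonup U_\infty$ weakly in $\Hsa$; by the compact embeddings this gives $U_k \to U_\infty$ strongly in $L^2_{\mathrm{loc}}(\Rp, y^a)$ and $U_k(\cdot,0) \to U_\infty(\cdot,0)$ strongly in $L^r_{\mathrm{loc}}(\Rn)$ for $r<\cx$ and pointwise a.e. Since each individual $U_j\in\Hsa$ satisfies the tail bound on its own by enlarging $\rho$, it suffices to prove the estimate uniformly for $k$ large.

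Choose $\psi_\rho\in C^\infty(\Rp;[0,1])$ with $\psi_\rho\equiv 0$ on $B_\rho^+$, $\psi_\rho\equiv 1$ outside $B_{2\rho}^+$, and $|\nabla\psi_\rho|\le C/\rho$. The test function $\psi_\rho^2 U_k$ has $\Hsa$-norm uniformly bounded in $\rho$ and $k$, so hypothesis (ii) yields
\[
\int_{\Rp} y^a \psi_\rho^2 |\nabla U_k|^2 dX + 2\int_{\Rp} y^a \psi_\rho U_k \scp{\nabla\psi_\rho,\nabla U_k} dX = \varepsilon\int h \psi_\rho^2 \ukp^{q+1} dx + \int \psi_\rho^2 \ukp^{\cx} dx + o_k(1).
\]
By Proposition \ref{qineq} the subcritical term is bounded by $C\varepsilon\|h\|_{L^m(\Rn\setminus B_\rho)}$, which vanishes as $\rho\to\infty$ since $h\in L^m(\Rn)$. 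The cross term, by Cauchy-Schwarz and $|\nabla\psi_\rho|\le C/\rho$, reduces to controlling $\rho^{-2}\int_{A_\rho} y^a U_k^2$ with $A_\rho := B_{2\rho}^+\setminus B_\rho^+$; splitting $U_k = U_\infty + (U_k-U_\infty)$, the difference vanishes for fixed $\rho$ by local $L^2(y^a)$-compactness, while the $U_\infty$ contribution is $o(1)$ as $\rho\to\infty$ via Hölder with exponents $\gamma,\gamma/(\gamma-1)$ and Proposition \ref{sob}, noting $|A_\rho|_{y^a}^{(\gamma-1)/\gamma}\sim\rho^2$ and $\int_{A_\rho} y^a U_\infty^{2\gamma}\to 0$.

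The critical term is the main obstacle. Hölder's inequality with exponents $\cx/2$ and $\cx/(\cx-2)$ together with the trace inequality (Proposition \ref{traceIneq}) give
\[
\int \psi_\rho^2 \ukp^{\cx} dx \le S^{-1}[\psi_\rho U_k]_a^2 \lr{\int_{|x|>\rho} \ukp^{\cx} dx}^{\frac{2s}{n}}.
\]
Provided the $\Lst$-tail $T_k(\rho):=\int_{|x|>\rho}\ukp^{\cx}$ can be made arbitrarily small uniformly in $k$ for $\rho$ large, one can make $S^{-1}T_k(\rho)^{2s/n}$ small enough to absorb the gradient contribution (bounded by $2\int y^a\psi_\rho^2|\nabla U_k|^2 + 2\int y^a U_k^2|\nabla\psi_\rho|^2$) back into the left-hand side of the main identity. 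This yields tightness of $\int y^a\psi_\rho^2|\nabla U_k|^2$, and the trace inequality then transfers this to $\Lst$-tightness, completing the proof.

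Ruling out the escape of $\Lst$-mass to infinity is the hardest step. I would argue by contradiction, in the spirit of Lions' second concentration-compactness lemma at infinity: if $T_k(\rho)$ is not uniformly small, a defect $\nu_\infty>0$ appears at infinity; the trace inequality applied to suitably localized cutoffs, combined with the asymptotic identification $\mu_\infty = \nu_\infty$ obtained from the same Palais-Smale test, forces $\nu_\infty\ge S^{n/(2s)}$. Combined with the Pohozaev-type identity
\[
\Fl(U_k) - \tfrac12\scp{\Fl'(U_k),U_k} = \varepsilon\lr{\tfrac12 - \tfrac{1}{q+1}}\int h\ukp^{q+1} dx + \tfrac{s}{n}\int \ukp^{\cx} dx,
\]
and estimating the subcritical term by Proposition \ref{proppqs} when $n\ge 6s$, or using the sign condition $h\ge 0$ when $n\in(2s,6s)$, this forces $c_\varepsilon \ge \tfrac{s}{n}S^{n/(2s)} - \bar C\varepsilon^{\cx/(\cx-(q+1))}$ (or $c_\varepsilon\ge\tfrac{s}{n}S^{n/(2s)}$ in low dimensions), contradicting hypothesis (i) for $\varepsilon$ small. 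Hence $T_k(\rho)$ is uniformly small and the absorption scheme above yields the desired tightness.
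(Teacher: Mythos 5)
Your approach---continuous cutoff plus a Lions ``concentration at infinity'' argument---is genuinely different from the paper's (which pigeonholes a unit-width shell of small energy, splits $U_k=V_k+W_k$ with a cutoff $\chi$ supported there, and estimates $\Fl(V_k)$ and $\Fl(W_k)$ separately). In principle your route is viable and would even avoid the paper's $\varepsilon^{\alpha/\gamma}$ error terms, since for fixed $\varepsilon$ you can send $k\to\infty$ then $\rho\to\infty$ and let all cutoff errors vanish outright. However, the final step contains a genuine gap in the case $n\geq 6s$.

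You claim that $c_\varepsilon+o_k(1)=\varepsilon\bigl(\tfrac12-\tfrac1{q+1}\bigr)\int h\ukp^{q+1}+\tfrac{s}{n}\int\ukp^{\cx}$, together with Proposition \ref{proppqs} and $\nu_\infty\geq S^{n/2s}$, forces $c_\varepsilon\geq\tfrac{s}{n}S^{n/2s}-\bar C\varepsilon^{\cx/(\cx-(q+1))}$. But you cannot use the same quantity $\|\ukp\|_{\Lst}^{\cx}$ both to produce the large term $\tfrac{s}{n}S^{n/2s}$ (via $\int\ukp^{\cx}\geq\nu_\infty\geq S^{n/2s}$) and to absorb the negative subcritical contribution (via Proposition \ref{proppqs}, which pairs $\tfrac{s}{n}\alpha^{\cx}$ against $-\varepsilon(\tfrac12-\tfrac1{q+1})\|h\|_{L^m}\alpha^{q+1}$): as written, applying Proposition \ref{proppqs} with $\alpha=\|\ukp\|_{\Lst}$ only yields the trivial bound $c_\varepsilon\geq-\bar C\varepsilon^{\cx/(\cx-(q+1))}$, which gives no contradiction. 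The fix is to split $\|\ukp\|_{\Lst}^{\cx}=\int_{|x|\leq\rho}\ukp^{\cx}+T_k(\rho)$ and $\int h\ukp^{q+1}=\int_{B_\rho}+\int_{\Rn\setminus B_\rho}$: the outer subcritical piece is controlled by $\varepsilon\|h\|_{L^m(\Rn\setminus B_\rho)}\|\ukp\|_{\Lst}^{q+1}\to 0$ as $\rho\to\infty$; the inner subcritical piece is absorbed against $\tfrac{s}{n}\int_{B_\rho}\ukp^{\cx}$ by Proposition \ref{proppqs} applied to the truncated mass $\alpha=\|\ukp\|_{L^{\cx}(B_\rho)}$; and the tail $\tfrac{s}{n}T_k(\rho)$ survives and converges to $\tfrac{s}{n}\nu_\infty\geq\tfrac{s}{n}S^{n/2s}$. (For $n\in(2s,6s)$ with $h\geq 0$ and $q\geq 1$ the subcritical term is nonnegative and your reasoning is fine.) Once this is repaired---and once you make precise that testing $\mathcal{F}'_\varepsilon$ against $\psi_\rho^2 U_k$ combined with the vanishing cross and $|\nabla\psi_\rho|^2U_k^2$ terms yields $\mu_\infty=\nu_\infty$ in the double limit, so the trace inequality gives $\nu_\infty\geq S^{n/2s}$---the argument closes.
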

\begin{proof}
First we notice that~\eqref{bound1}
holds in this case, due to conditions~(i) and~(ii) in
Theorem~\ref{PSCthm}. Hence,
Lemma \ref{Ukisbd} 
gives that the sequence 
$\{U_k\}_{k\in\mathbb{N}}$ is uniformly bounded in $\dot{H}^s_a(\mathbb{R}^{n+1}_+)$, and thus
\begin{equation}\begin{split}\label{weak convergence-1}
& U_k\rightharpoonup U \quad \hbox{ in }\dot{H}^s_a(\mathbb{R}^{n+1}_+) \quad {\mbox{ as }}k\to+\infty \\
{\mbox{and }}& U_k\rightarrow U\quad  \hbox{ a.e. in }\mathbb{R}^{n+1}_+\quad {\mbox{ as }}k\to+\infty.
\end{split}\end{equation}

\noindent We now proceed by contradiction. Suppose that there exists $\eta_0>0$ 
such that for all $\rho>0$ there exists~$k=k(\rho)\in\N$ such that
\begin{equation}\label{contrad}
\int_{\mathbb{R}^{n+1}_+\setminus B_\rho^+}{y^a|\nabla U_k|^2\,dX}
+\int_{\mathbb{R}^n\setminus\{B_\rho\cap\{y=0\}\}}{(U_k)_+^{2^*_s}(x,0)\,dx}
\geq \eta_0.
\end{equation}
We observe that 
\begin{equation}\label{forse0} 
k\to+\infty \quad {\mbox{ as }}\rho\to+\infty.
\end{equation}
Indeed, let us take a sequence $\{\rho_i\}_{i\in\mathbb{N}}$ such that $\rho_i\rightarrow +\infty$ as $i\rightarrow +\infty$, and suppose that $k_i:=k(\rho_i)$ given by \eqref{contrad} is a bounded sequence. That is, the set $F:=\{k_i:\;i\in\N\}$ is a finite set of integers.

Hence, there exists an integer $k^\star$ so that we can extract a subsequence $\{k_{i_j}\}_{j\in\N}$ satisfying $k_{i_j}=k^\star$ for any $j\in\N$. Therefore,
\begin{equation}\label{contrad2}
\int_{\mathbb{R}^{n+1}_+\setminus B_{\rho_{i_j}}^+}{y^a|\nabla U_{k^\star}|^2\,dX}
+\int_{\mathbb{R}^n\setminus\{B_{\rho_{i_j}}\cap\{y=0\}\}}{(U_{k^\star})_+^{2^*_s}(x,0)\,dx}
\geq \eta_0,
\end{equation}
for any $j\in \N$. 
But on the other hand, since~$U_{k^\star}$
belongs to~$\dot{H}^s_a(\mathbb{R}^{n+1}_+)$ 
(and so $U_{k^\star}(\cdot,0)\in L^{2^*_s}(\R^n)$ 
thanks to Proposition \ref{traceIneq}), for $j$ large enough there holds
\begin{equation*}
\int_{\mathbb{R}^{n+1}_+\setminus B_{\rho_{i_j}}^+}{y^a|\nabla U_{k^\star}|^2\,dX}
+\int_{\mathbb{R}^n\setminus\{B_{\rho_{i_j}}\cap\{y=0\}\}}{(U_{k^\star})_+^{2^*_s}(x,0)\,dx}
\leq \frac{\eta_0}{2},
\end{equation*}
which is a contradiction with \eqref{contrad2}.
This shows~\eqref{forse0}. 

Now, since $U$ given in~\eqref{weak convergence-1} belongs to~$\in\dot{H}^s_a(\mathbb{R}^{n+1}_+)$, by Propositions~\ref{traceIneq} 
and~\ref{sob} 
we have that for a fixed $\varepsilon>0$, there exists $r_\varepsilon>0$ such that
$$\int_{\mathbb{R}^{n+1}_+\setminus B_{r_\varepsilon}^+}{y^a|\nabla U|^2\,dX}
+\int_{\mathbb{R}^{n+1}_+\setminus B_{r_\varepsilon}^+}{y^a|U|^{2\gamma}\,dX}
+\int_{\mathbb{R}^n\setminus\{B_{r_\varepsilon}\cap\{y=0\}\}}{|U(x,0)|^{2^*_s}\,dx}<\varepsilon^\alpha,$$
with $\alpha>\gamma$ 
 and $\gamma$ defined in Proposition \ref{sob}. Notice that, without loss of generality, we can assume that 
\begin{equation}\label{eps to zero}
{\mbox{$r_\eps\to +\infty$ as $\eps\to 0$.}} 
\end{equation}
On the other hand, since $h\in L^m(\R^n)$ for every $m\in(1,+\infty)$, in particular we can assure the existence of a radius $\bar{r}_\eps$ such that
\begin{equation}\label{heps}
\|h\|_{L^{m}(\R^n\setminus B_{\bar{r}_\eps})}\leq \eps^{\beta},
\end{equation}
with $m$ satisfying $\frac{1}{m}=1-\frac{q+1}{2^*_s}$ and $\beta>\alpha/\gamma-1$.\\
Moreover, by \eqref{bound} and again by Propositions~\ref{traceIneq}
and~\ref{sob}, there exists $\tilde{M}>0$ such that
\begin{equation}\label{boundk}
\int_{\mathbb{R}^{n+1}_+}{y^a|\nabla U_k|^2\,dX}+\int_{\mathbb{R}^{n+1}_+}{y^a|U_k|^{2\gamma}\,dX}
+\int_{\mathbb{R}^n}{|U_k(x,0)|^{2^*_s}\,dx}\leq \tilde{M}.
\end{equation}
Let \textcolor{black}{\eqlab{\label{chooser} r:=\max\{r_\eps,\bar{r}_\eps\}.}} Now let $j_\eps\in\mathbb{N}$ be the integer part of $\frac{\tilde{M}}{\varepsilon^\alpha}$. Notice that~$j_\eps$ tends to~$+\infty$ as~$\eps$ 
tends to~0. We also set
$$ I_l:=\{(x,y)\in\mathbb{R}^{n+1}_+:r+l\leq |(x,y)|\leq r+(l+1)\},\;l=0,1,\cdots,j_\eps.$$
Thus, from~\eqref{boundk} we get
\begin{eqnarray*}
(j_\eps+1)\varepsilon^\alpha &\geq &
\frac{\tilde{M}}{\varepsilon^\alpha}\varepsilon^\alpha \\&\ge &
 \sum_{l=0}^{j_\eps}\left({\int_{I_l}{y^a|\nabla U_k|^2\,dX}
 +\int_{I_l}{y^a|U_k|^{2\gamma}\,dX}
+\int_{I_l\cap\{y=0\}}{|U_k(x,0)|^{2^*_s}\,dx}}\right),
\end{eqnarray*}
and this implies the existence of $\bar{l}\in\{0,1,\cdots, j_\eps\}$ such that, 
up to a subsequence, 
\begin{equation}\label{epsBound}
\int_{I_{\bar{l}}}{y^a|\nabla U_k|^2\,dX}+\int_{I_{\bar{l}}}{y^a|U_k|^{2\gamma}\,dX}
+\int_{I_{\bar{l}}\cap\{y=0\}}{|U_k(x,0)|^{2^*_s}\,dx}\leq \varepsilon^\alpha .
\end{equation}
We take now a cut-off function~$\chi\in C^\infty_0(\R^{n+1}_+,[0,1])$, 
such that
\begin{equation}\label{3.4bis}
\chi(x,y)=\begin{cases}
1,\quad |(x,y)|\leq r+\bar{l}\\
0,\quad |(x,y)|\geq r+(\bar{l}+1),
\end{cases}
\end{equation}
and 
\begin{equation}\label{3.4bisbis}
|\nabla \chi|\leq 2.
\end{equation} 
We also define 
\begin{equation}\label{3.4ter}
V_k:=\chi U_k \quad {\mbox{ and }}\quad W_k:=(1-\chi)U_k.
\end{equation}
We estimate
\begin{equation}\begin{split}\label{math F}
&|\langle \mathcal{F}'_\eps(U_k)-\mathcal{F}'_\eps(V_k),V_k\rangle |\\
&\quad = \bigg|\int_{\mathbb{R}^{n+1}_+}{y^a\langle\nabla U_k,\nabla V_k\rangle\,dX}
-\eps \int_{\mathbb{R}^{n}}{h(x) (U_k)_+^q(x,0)\,V_k(x,0)\,dx}\\
&\qquad\quad -\int_{\mathbb{R}^{n}}{(U_k)_+^{\cx-1}(x,0)\,V_k(x,0)\,dx}
-\int_{\mathbb{R}^{n+1}_+}{y^a\langle\nabla V_k,\nabla V_k\rangle\,dX}\\
&\qquad\quad +\eps \int_{\mathbb{R}^{n}}{h(x) (V_k)_+^{q+1}(x,0)\,dx}
+\int_{\mathbb{R}^{n}}{(V_k)_+^{\cx}(x,0)\,dx}\bigg|.
\end{split}\end{equation}
First, we observe that
\begin{equation}\begin{split}\label{AA}
&\bigg|\int_{\mathbb{R}^{n+1}_+}{y^a\langle\nabla U_k,\nabla V_k\rangle\,dX}-\int_{\mathbb{R}^{n+1}_+}{y^a\langle\nabla V_k,\nabla V_k\rangle\,dX}\bigg|\\
&\qquad\leq \int_{I_{\overline{l}}}{y^a|\nabla U_k|^2|\chi||1-\chi|\,dX}+\int_{I_{\overline{l}}}{y^a|\nabla U_k||U_k||\nabla\chi|\,dX}\\
&\qquad\qquad +2\int_{I_{\overline{l}}}{y^a|U_k||\nabla U_k||\nabla\chi||\chi|\,dX}+\int_{I_{\overline{l}}}{y^a|U_k|^2|\nabla \chi|^2\,dX}\\
&\qquad =:A_1+A_2+A_3+A_4.
\end{split}\end{equation}
By \eqref{epsBound}, we have that $A_1\leq C\varepsilon^\alpha $, for some $C>0$. 
Furthermore, by the H\"older inequality, \eqref{3.4bisbis} and \eqref{epsBound}, we obtain
\begin{eqnarray*}
A_2 &\leq& 2\int_{I_{\overline{l}}}{y^a|\nabla U_k||U_k|\,dX}\leq 2\left(\int_{I_{\overline{l}}}{y^a|\nabla U_k|^2\,dX}\right)^{1/2}\left(\int_{I_{\overline{l}}}{y^a| U_k|^2\,dX}\right)^{1/2}\\
&\leq& 2\varepsilon^{\alpha/2}  \left(\int_{I_{\overline{l}}}{y^a|U_k|^{2\gamma}\,dX}\right)^{1/{2\gamma}}
\left(\int_{I_{\overline{l}}}y^{a}\,dX \right)^{\frac{\gamma-1}{2\gamma}}.
\end{eqnarray*}
Since $a=(1-2s)>-1$,
the second integral is finite, 
and therefore, for $\varepsilon<1$,
\begin{equation*}
A_2\leq \tilde{C}\varepsilon^{\alpha/2}  \left(\int_{ I_{\overline{l} }}{y^a|U_k|^{2\gamma}\,
dX}\right)^{1/{2\gamma}}\leq C\varepsilon^{\alpha/2} \eps^{\alpha/2\gamma}\le C\eps^{\alpha/\gamma},
\end{equation*}
where \eqref{epsBound} was used again. 
In the same way, we get that $A_3\leq C\varepsilon^{\alpha/\gamma}$. Finally, 
\begin{equation*}
A_4\leq C\left(\int_{I_{ \overline{l} }}{y^a|U_k|^{2\gamma}\,dX}\right)^{1/{\gamma}}
\left(\int_{I_{\overline{l}}}{y^{a}\,dX}\right)^{\frac{\gamma-1}{\gamma}}\leq C\eps^{\alpha/\gamma}.
\end{equation*}
Using this information in \eqref{AA}, since $\alpha>\alpha/\gamma$ we obtain that 
$$ \bigg|\int_{\mathbb{R}^{n+1}_+}{y^a\langle\nabla U_k,\nabla V_k\rangle\,dX}
-\int_{\mathbb{R}^{n+1}_+}{y^a\langle\nabla V_k,\nabla V_k\rangle\,dX}\bigg|
\le C\eps^{\alpha/\gamma}, $$
up to renaming the constant $C$. \\
On the other hand by \eqref{3.4ter} and \eqref{epsBound}, 
\begin{eqnarray*}
\bigg|\int_{\mathbb{R}^n}{\Big( (U_k)_+^{\cx-1}(x,0)\,V_k(x,0)-(V_k)_+^{\cx}(x,0)\Big)\,dx}\bigg|
&\le &\int_{\mathbb{R}^n}{|1-\chi^{\cx-1}||\chi| |U_k(x,0)|^{\cx}\,dx}\\
&\leq& C\int_{I_{\overline{l}}\cap\{y=0\}}{|U_k(x,0)|^{2^*_s}\,dx}\leq C\eps^{\alpha}.
\end{eqnarray*}
In the same way, applying the H\"older inequality, one obtains
\begin{equation}\begin{split}\label{ealpha}
&\bigg|\varepsilon\int_{\mathbb{R}^n}{h(x)\,\left((U_k)_+^q(x,0)\,V_k(x,0)-(V_k)_+^{q+1}(x,0)\right)\,dx}\bigg|
\\&\qquad \le \varepsilon\int_{\mathbb{R}^n}{|h(x)|\,|1-\chi^q||\chi| |U_k(x,0)|^{q+1}\,dx}\\
&\qquad \leq C\, \varepsilon\|h\|_{L^\infty(\R^n)}\,
\int_{I_{\overline{l}}\cap\{y=0\}}{|U_k(x,0)|^{2^*_s}\,dx}\leq C\eps^{1+\alpha}.
\end{split}\end{equation}

\noindent All in all, plugging these observations in \eqref{math F}, we obtain that 
\begin{equation}\label{boundV}
|\langle \mathcal{F}'_\eps(U_k)-\mathcal{F}'_\eps(V_k),V_k\rangle|
\leq C\eps^{\alpha/\gamma}.
\end{equation}
Likewise, one can see that
\begin{equation}\label{boundW}
|\langle \mathcal{F}'_\eps(U_k)-\mathcal{F}'_\eps(W_k),W_k\rangle|
\leq C\eps^{\alpha/\gamma}.
\end{equation}

\noindent Now we claim that 
\begin{equation}\label{fprimeV}
|\langle \mathcal{F}'_\eps(V_k),V_k\rangle|\leq C\eps^{\alpha/\gamma}+o_k(1),
\end{equation}
where $o_k(1)$ denotes (here and in the rest of this paper)
a quantity that tends to 0 as $k$ tends to $+\infty$. 
For this, we first observe that 
\begin{equation}\label{bbbb}
[V_k]_a\le C\mbox{ and }[W_k]_a\leq C,
\end{equation}
for some $C>0$. Indeed, recalling \eqref{3.4ter} and using \eqref{3.4bis} 
and \eqref{3.4bisbis}, we have 
\begin{eqnarray*}
[V_k]_a^2 &=& \int_{\R^{n+1}_+}y^a|\nabla V_k|^2\,dX \\ 
&=& \int_{\R^{n+1}_+}y^a|\nabla\chi|^2|U_k|^2\,dX + 
\int_{\R^{n+1}_+}y^a\,\chi^2|\nabla U_k|^2\,dX + 2\int_{\R^{n+1}_+}y^a\,\chi\,U_k
\ \langle \nabla U_k, \nabla\chi\rangle\,dX\\
&\le & 4 \int_{I_{\overline{l}} }y^a| U_k|^2\,dX + [U_k]_a^2 +
C\left(\int_{ I_{\overline{l}}}y^a|\nabla U_k|^2\,dX\right)^{1/2}\, 
\left(\int_{I_{\overline{l}}}y^a |U_k|^2\,dX\right)^{1/2}\\
&\le & C \left(\int_{I_{\overline{l}} }y^a| U_k|^{2\gamma}\,dX\right)^{1/\gamma} 
+ [U_k]_a^2 +
C\, [U_k]_a\, \left(\int_{I_{\overline{l}}}y^a |U_k|^{2\gamma}\,dX\right)^{1/2\gamma}, 
\end{eqnarray*}
where the H\"older inequality was used in the last two lines.  
Hence, from Proposition \ref{sob} and using \eqref{bound}, we obtain \eqref{bbbb}. The estimate for $W_k$ can be proved analogously.

Now, we notice that 
\begin{eqnarray*}
|\langle \mathcal{F}'_\eps(V_k),V_k\rangle| \le 
|\langle \mathcal{F}'_\eps(V_k)-\mathcal{F}'_\eps(U_k),V_k\rangle| + 
|\langle \mathcal{F}'_\eps(U_k),V_k\rangle| \le  
C\,\eps^{\alpha/\gamma} +|\langle \mathcal{F}'_\eps(U_k),V_k\rangle|,
\end{eqnarray*}
thanks to \eqref{boundV}. 
Thus, from \eqref{bbbb} and assumption (ii) in Theorem \ref{PSCthm} 
we get the desired claim in \eqref{fprimeV}. 

Analogously (but making use of \eqref{boundW}), one can see that  
\begin{equation}\label{fprimeW}
|\langle \mathcal{F}'_\eps(W_k),W_k\rangle|\leq C\eps^{\alpha/\gamma}+o_k(1),
\end{equation}

We give now the proof of  Lemma \ref{tightness} for $n\geq 6s$. We notice here that the computations that follow are also true in small dimensions. However, for $n<6s$, it is not possible in this way to construct a path that lies below the needed critical level (one can check the hypothesis (i) in Theorem \ref{PSC}). Indeed, for $n\in(2s,6s)$ the additional hypothesis that $h\geq 0$ (given in Theorem \ref{thm}) is required, as we see further on.

\bigskip

So for $n\geq 6s$, we divide the proof in three main steps: 
we first show lower bounds for $\mathcal{F}_\eps(V_k)$ 
and $\mathcal{F}_\eps(W_k)$ (see Step 1 and Step 2, respectively), 
and then in Step 3 we obtain a lower bound for $\mathcal{F}_\eps(U_k)$, 
which will give a contradiction with the hypotheses on $\mathcal{F}_\eps$, 
and so the conclusion of Lemma \ref{tightness}.

\medskip

\noindent {\it Step 1: Lower bound for $\mathcal{F}_\eps(V_k)$.} 
Recalling  that 
$$ \frac12-\frac{1}{\cx}=\frac{s}{n}$$ we have by Proposition \ref{qineq} that
\begin{equation*}\begin{split}
\mathcal{F}_\eps(V_k)&-\frac{1}{2}\langle \mathcal{F}'_\eps(V_k), V_k\rangle 
= \left(\frac{1}{2}-\frac{1}{\cx}\right)\|(V_k)_+(\cdot,0)\|_{\Lst}^{\cx}\\
&\;\;+\eps\left(\frac{1}{2}-\frac{1}{q+1}\right)\int_{\R^n}h(x)(V_k)_+^{q+1}(x,0)\,dx\\
&\geq \frac{s}n \|(V_k)_+(\cdot,0)\|_{\Lst}^{\cx} - \eps\left(\frac{1}{2}-\frac{1}{q+1}\right)\|h\|_{L^m(\Rn)} \|(V_k)_+(\cdot,0)\|_{\Lst}^{q+1},
\end{split}\end{equation*}
and by Proposition \ref{proppqs} and \eqref{fprimeV} we get that
\begin{equation}\label{LowerBoundV}
\mathcal{F}_\varepsilon(V_k)\geq -C\eps^{\alpha/\gamma}- \overline C \eps^{\frac{\cx}{\cx-(q+1)}}+o_k(1).
\end{equation}
\\

\noindent {\it Step 2: Lower bound for $\mathcal{F}_\eps(W_k)$.} 
First of all, by the definition of $W_k$ in \eqref{3.4ter} \textcolor{black}{(recall that $W_k$ is supported in $\Rn \setminus B_{r+\overline l}\subset \Rn\setminus B_{\bar r_{\eps}}$, using also \eqref{chooser})}, by Proposition \ref{qineq} and \ref{traceIneq}, using \eqref{heps} and \eqref{bbbb}, we have that 
\begin{equation}\begin{split}\label{upBoundWq}
\bigg|\eps\int_{\mathbb{R}^n}{ h(x)(W_k)_+^{q+1}(x,0)\,dx }\bigg|
\leq\,&\eps\int_{\mathbb{R}^n\setminus B_{\bar{r}_\eps}}{ |h(x)|(W_k)_+^{q+1}(x,0)\,dx }\\
\leq\,&\eps\, \|h\|_{L^m(\mathbb{R}^n\setminus B_{\bar{r}_\eps})}
\|(W_k)_+(\cdot,0)\|_{\Lst}^{q+1}\\
\leq\,& \eps\, C\, \|h\|_{L^m(\mathbb{R}^n\setminus B_{\bar{r}_\eps})}[W_k]_a^{q+1}\\
\leq\,& C\eps^{1+\beta},
\end{split}\end{equation}
where $1+\beta>\alpha/\gamma$. Thus, from \eqref{fprimeW} 
we get that
\begin{equation}\begin{split}\label{Wbound}
&\bigg|\int_{\mathbb{R}^{n+1}_+} {y^a|\nabla W_k|^2\,dX}
-\int_{\mathbb{R}^n}  {(W_k)_+^{\cx} (x,0)\,dx} \bigg|
\\ &\qquad\le \left|\langle \mathcal{F}'_\eps(W_k),W_k\rangle\right| + 
\left|\eps\int_{\mathbb{R}^n}{ h(x) (W_k)_+^{q+1}(x,0)\,dx}\right|\\
&\qquad \leq C\varepsilon^{\alpha/\gamma} + o_k(1).
\end{split}\end{equation}
Moreover, notice that $W_k=U_k$ in $\R^{n+1}_+\setminus B_{r+\overline{l}+1}$ 
(recall \eqref{3.4bis} and \eqref{3.4ter}). 
Hence, using \eqref{contrad} with $\rho:=r+\overline{l}+1$, we get 
\begin{equation}\begin{split}\label{espero}
&\int_{\mathbb{R}^{n+1}_+\setminus B^+_{r+\bar{l}+1}}{y^a|\nabla W_k|^2\,dX}
+\int_{\mathbb{R}^n\setminus\{B_{r+\bar{l}+1}\cap\{y=0\}\}}{(W_k)_+^{2^*_s}(x,0)\,dx}\\
&\qquad =\int_{\mathbb{R}^{n+1}_+\setminus B^+_{r+\bar{l}+1}}{y^a|\nabla U_k|^2\,dX}
+\int_{\mathbb{R}^n\setminus\{B_{r+\bar{l}+1}\cap\{y=0\}\}}
{(U_k)_+^{2^*_s}(x,0)\,dx}
\geq \eta_0,
\end{split}\end{equation}
for $k=k(\rho)$. 
We observe that $k$ tends to $+\infty$ as $\eps\to 0$, 
thanks to \eqref{forse0} and \eqref{eps to zero}. 

From \eqref{espero} we obtain that either 
$$ \int_{\mathbb{R}^n\setminus\{ B_{r+\bar{l}+1} \cap\{y=0\}\} }
{(W_k)_+^{2^*_s}(x,0)\,dx}
\ge\frac{\eta_0}{2}$$
or
$$ \int_{\mathbb{R}^{n+1}_+\setminus B^+_{r+\bar{l}+1}}{y^a|\nabla W_k|^2\,dX}
\ge\frac{\eta_0}{2}.
$$
In the first case, we get that 
$$\int_{\mathbb{R}^n} { (W_k)_+^{2^*_s}(x,0)\,dx}\ge \int_{\mathbb{R}^n\setminus\{B_{r+\bar{l}+1}\cap\{y=0\}\}}
{(W_k)_+^{2^*_s}(x,0)\,dx}\ge 
\frac{\eta_0}{2}.$$
In the second case, taking $\varepsilon$ small (and so $k$ large enough), by \eqref{Wbound} 
we obtain that
\begin{equation*}\begin{split}
\int_{\mathbb{R}^n}{(W_k)_+^{\cx}(x,0)\,dx} & \geq 
\int_{\mathbb{R}^{n+1}_+}{y^a|\nabla W_k|^2\,dX}-C\varepsilon^{\alpha/\gamma}-o_k(1)
\\ &\geq \int_{\mathbb{R}^{n+1}_+\setminus B^+_{r+\bar{l}+1}}
{y^a|\nabla W_k|^2\,dX}-C\varepsilon^{\alpha/\gamma}-o_k(1)
\\ & >\frac{\eta_0}{4}.
\end{split}\end{equation*}
Hence, in both cases we have that 
\begin{equation}\label{lowBoundWp}
\int_{\mathbb{R}^n}{(W_k)_+^{\cx}(x,0)\,dx} >\frac{\eta_0}{4}
\end{equation}
for $\varepsilon$ small and $k$ large enough. We now define $\psi_k:=\alpha_kW_k$, with
$$ \alpha_k^{\cx-2}:=\frac{[W_k]_a^2}{\|(W_k)_+(\cdot,0)\|_{\Lst}^{\cx}}.$$
Notice that from \eqref{fprimeW} we have that
\begin{eqnarray*}
[W_k]_a^2 &\le & \|(W_k)_+(\cdot,0)\|_{\Lst}^{\cx}
+\left|\eps\int_{\R^n}h(x)(W_k)_+^{q+1}(x,0)\,dx\right|
+C\,\varepsilon^{\alpha/\gamma} +o_k(1)\\
&\le & \|(W_k)_+(\cdot,0)\|_{\Lst}^{\cx}
+C\,\varepsilon^{\alpha/\gamma} +o_k(1),
\end{eqnarray*}
where \eqref{upBoundWq} was used in the last line. 
Hence, thanks to \eqref{lowBoundWp}, we get that 
\begin{equation}\label{star-1}
\alpha_k^{\cx-2}\leq 1+C\varepsilon^{\alpha/\gamma}+o_k(1).\end{equation}
Also, we notice that for this value of $\alpha_k$, we have the following chain of identities,
$$[\psi_k]^2_a=\alpha_k^2[W_k]_a^2=\alpha^{\cx}_k
\|(W_k)_+(\cdot,0)\|_{\Lst}^{\cx}
=\|(\psi_k)_+(\cdot,0)\|_{\Lst}^{\cx}.$$
Thus, by Proposition \ref{traceIneq} and~\eqref{isom}, we obtain 
\begin{eqnarray*}
&& S\leq \frac{[\psi_k(\cdot,0)]^2_{\dot{H}^s(\mathbb{R}^n)}}
{\|(\psi_k)_+(\cdot,0)\|_{\Lst}^2}
=\frac{[\psi_k]_a^2}{\|(\psi_k)_+(\cdot,0)\|_{\Lst}^{2}}=\frac{\|(\psi_k)_+(\cdot,0)\|_{\Lst}^{\cx}}
{\|(\psi_k)_+(\cdot,0)\|_{\Lst}^{2}}
=\|(\psi_k)_+(\cdot,0)\|_{\Lst}^{\frac{4s}{n-2s}}.
\end{eqnarray*}
Consequently,
$$\|(W_k)_+(\cdot,0)\|_{\Lst}^{\cx}
=\frac{\|(\psi_k)_+(\cdot,0)\|_{\Lst}^{\cx}}{\alpha_k^{\cx}}
\geq S^{n/2s}\frac{1}{\alpha_k^{\cx}}.$$
This, together with \eqref{star-1}, gives that 
\begin{equation}\begin{split}\label{swkbla}
S^{n/2s}\leq\;&(1+C\varepsilon^{\alpha/\gamma}+o_k(1))^{\frac{\cx}{\cx-2}}
\|(W_k)_+(\cdot,0)\|_{\Lst}^{\cx}\\
\leq\;& \|(W_k)_+(\cdot,0)\|_{\Lst}^{\cx}+C\varepsilon^{\alpha/\gamma}+o_k(1).
\end{split}\end{equation}
We get that 
\begin{eqnarray*}
\mathcal{F}_\eps(W_k)-\frac{1}{2}\langle \mathcal{F}'_\eps(W_k),W_k\rangle
&=&\frac{s}{n}\|(W_k)_+(\cdot,0)\|_{\Lst}^{\cx}\\
&&\qquad +\eps\left(\frac{1}{2}-\frac{1}{q+1}\right)
\int_{\mathbb{R}^n}{h(x)(W_k)_+^{q+1}(x,0)\,dx}\\
&\geq&\frac{s}{n}S^{n/2s} -C\varepsilon^{\beta+1}-C\varepsilon^{\alpha/\gamma}+o_k(1),
\end{eqnarray*}
where we have used \eqref{upBoundWq} to estimate the $(q+1)$-order term.
Finally, using also \eqref{fprimeW} and the fact that $\beta+1>\alpha/\gamma$, we get 
\begin{equation}\label{LowBoundFW}
\mathcal{F}_\eps(W_k)\geq \frac{s}{n}S^{n/2s}-C\varepsilon^{\alpha/\gamma}+o_k(1).
\end{equation}
\\

\noindent {\it Step 3: Lower bound for $\mathcal{F}_\eps(U_k)$.} 
We first observe that by definition
we can write 
\begin{equation}\label{adwetperigyrejh}
U_k=(1-\chi)U_k+\chi U_k=W_k+V_k.\end{equation} 
Therefore
\begin{equation}\begin{split}\label{sumF}
\mathcal{F}_\eps(U_k) = &\, \mathcal{F}_\eps(V_k)+\mathcal{F}_\eps(W_k)
+\int_{\mathbb{R}^{n+1}_+}{y^a\langle\nabla V_k,\nabla W_k\rangle\,dX} \\
&\quad +\frac{1}{\cx}\int_{\mathbb{R}^n}{(V_k)_+^{\cx}(x,0)\,dx}
+\frac{\eps}{q+1}\int_{\mathbb{R}^n}{h(x)(V_k)_+^{q+1}(x,0)\,dx}\\
&\quad +\frac{1}{\cx}\int_{\mathbb{R}^n}{(W_k)_+^{\cx}(x,0)\,dx}
+\frac{\eps}{q+1}\int_{\mathbb{R}^n}{h(x)(W_k)_+^{q+1}(x,0)\,dx}\\
&\quad -\frac{1}{\cx}\int_{\mathbb{R}^n}{(U_k)_+^{\cx}(x,0)\,dx}
-\frac{\eps}{q+1}\int_{\mathbb{R}^n}{h(x)(U_k)_+^{q+1}(x,0)\,dx}.
\end{split}\end{equation}
On the other hand,
\begin{eqnarray*}
&& \int_{\mathbb{R}^{n+1}_+}{y^a\langle\nabla V_k,\nabla W_k\rangle\,dX} \\
&&\qquad = \frac{1}{2}\int_{\mathbb{R}^{n+1}_+}{y^a\langle\nabla U_k-\nabla V_k,\nabla V_k\rangle\,dX} +\frac{1}{2}\int_{\mathbb{R}^{n+1}_+}{y^a\langle\nabla U_k-\nabla W_k,\nabla W_k\rangle\,dX}.
\end{eqnarray*}
Also
\begin{eqnarray*}
&&\langle \mathcal{F}_\varepsilon'(U_k)-\mathcal{F}_\varepsilon'(V_k),V_k
\rangle \\&=& 
\int_{\R^{n+1}_+}y^a\langle\nabla U_k-\nabla V_k, \nabla V_k\rangle\,dX \\
&&\qquad - \eps \int_{\R^n}h(x)(U_k)_+^q(x,0)\,V_k(x,0)\,dx - 
\int_{\R^n}(U_k)_+^{\cx-1}(x,0)\,V_k(x,0)\,dx\\
&&\qquad +\eps \int_{\R^n}h(x)(V_k)_+^{q+1}(x,0)\,dx + 
\int_{\R^n}(V_k)_+^{\cx}(x,0)\,dx,
\end{eqnarray*}
and 
\begin{eqnarray*}
&&\langle \mathcal{F}_\varepsilon'(U_k)-\mathcal{F}_\varepsilon'(W_k),W_k
\rangle \\&=& 
\int_{\R^{n+1}_+}y^a\langle\nabla U_k-\nabla W_k, \nabla W_k\rangle\,dX \\
&&\qquad - \eps \int_{\R^n}h(x)(U_k)_+^q(x,0)\,W_k(x,0)\,dx - 
\int_{\R^n}(U_k)_+^{\cx-1}(x,0)\,W_k(x,0)\,dx\\
&&\qquad +\eps\int_{\R^n}h(x)(W_k)_+^{q+1}(x,0)\,dx + 
\int_{\R^n}(W_k)_+^{\cx}(x,0)\,dx.
\end{eqnarray*}
Hence, plugging the three formulas above into \eqref{sumF} we get
\begin{equation*}\begin{split}
\mathcal{F}_\eps(U_k) = &\, \mathcal{F}_\eps(V_k)+\mathcal{F}_\eps(W_k)+\frac12
\langle \mathcal{F}_\varepsilon'(U_k)-\mathcal{F}_\varepsilon'(V_k),V_k
\rangle +\frac12\langle \mathcal{F}_\varepsilon'(U_k)-\mathcal{F}_\varepsilon'(W_k),W_k
\rangle \\
&\quad +\frac{1}{\cx}\int_{\mathbb{R}^n}{(V_k)_+^{\cx}(x,0)\,dx}
+\frac{\eps}{q+1}\int_{\mathbb{R}^n}{h(x)(V_k)_+^{q+1}(x,0)\,dx}\\
&\quad +\frac{1}{\cx}\int_{\mathbb{R}^n}{(W_k)_+^{\cx}(x,0)\,dx}
+\frac{\eps}{q+1}\int_{\mathbb{R}^n}{h(x)(W_k)_+^{q+1}(x,0)\,dx}\\
&\quad -\frac{1}{\cx}\int_{\mathbb{R}^n}{(U_k)_+^{\cx}(x,0)\,dx}
-\frac{\eps}{q+1}\int_{\mathbb{R}^n}{h(x)(U_k)_+^{q+1}(x,0)\,dx}\\
&\quad +\frac{\eps}{2}
\int_{\R^n}h(x)(U_k)_+^q(x,0)\,V_k(x,0)\,dx + 
\frac12 \int_{\R^n}(U_k)_+^{2^*_s-1}(x,0)\,V_k(x,0)\,dx\\
&\quad -\frac{\eps}{2}\int_{\R^n}h(x)(V_k)_+^{q+1}(x,0)\,dx - 
\frac12\int_{\R^n}(V_k)_+^{\cx}(x,0)\,dx\\
&\quad + \frac{\eps}{2} \int_{\R^n}h(x)(U_k)_+^q(x,0)
\,W_k(x,0)\,dx 
+\frac12 \int_{\R^n}(U_k)_+^{\cx-1}(x,0)\,W_k(x,0)\,dx\\
&\quad -\frac{\eps}{2}\int_{\R^n}h(x)(W_k)_+^{q+1}(x,0)\,dx - \frac12
\int_{\R^n}(W_k)_+^{\cx}(x,0)\,dx.
\end{split}\end{equation*}
Therefore, using~\eqref{boundV} and~\eqref{boundW} we obtain that 
\begin{equation*}\begin{split}
\mathcal{F}_\eps(U_k) \ge &\, \mathcal{F}_\eps(V_k)+\mathcal{F}_\eps(W_k)\\
&\quad +\frac{1}{\cx}\int_{\mathbb{R}^n}{(V_k)_+^{\cx}(x,0)\,dx}
+\frac{1}{\cx}\int_{\mathbb{R}^n}{(W_k)_+^{\cx}(x,0)\,dx}  -\frac{1}{\cx}\int_{\mathbb{R}^n}{(U_k)_+^{\cx}(x,0)\,dx}\\
&\quad +\frac12 \int_{\R^n}(U_k)_+^{\cx-1}(x,0)\,V_k(x,0)\,dx  +\frac12 \int_{\R^n}(U_k)_+^{\cx-1}(x,0)\,W_k(x,0)\,dx  \\
&\quad - \frac12\int_{\R^n}(V_k)_+^{\cx}(x,0)\,dx
 - \frac12 \int_{\R^n}(W_k)_+^{\cx}(x,0)\,dx  \\
 &\quad -\eps\left(\frac{1}{2}-\frac{1}{q+1}\right)\int_{\mathbb{R}^n}{h(x)(W_k)_+^{q+1}(x,0)\,dx}\\
 &\quad-\eps\left(\frac{1}{2}-\frac{1}{q+1}\right)\int_{\mathbb{R}^n}{h(x)(V_k)_+^{q+1}(x,0)\,dx}\\
 &\quad - \frac{\eps}{q+1} \int_{\Rn} h(x) \ukp^{q+1}(x,0)\, dx + \frac{\eps}2 \int_{\Rn} h(x)\ukp^q(x,0)V_k(x,0)\, dx \\  
&\quad + \frac{\eps}2 \int_{\Rn} h(x)\ukp^q(x,0)W_k(x,0)\, dx  -C\varepsilon^{\alpha/\gamma},
\end{split}\end{equation*}
for some positive~$C$. We use identity~\eqref{adwetperigyrejh} to write
\[ \ukp^{\cx-1}(V_k+W_k)=\ukp^{\cx}\quad \mbox{ and }\quad \ukp^{q+1}=\ukp^q(V_k+W_k),\]
and obtain that
\eqlab{\label{qwqweteyryhg}
\mathcal{F}_\eps(U_k) \ge &\, \mathcal{F}_\eps(V_k)+\mathcal{F}_\eps(W_k)\\ 
&\quad+\bigg( \frac12-\frac{1}{\cx}\bigg) \lrq{ \int_{\mathbb{R}^n}(U_k)_+^{\cx}(x,0)\,dx -\int_{\mathbb{R}^n}(V_k)_+^{\cx}(x,0)\,dx 
-\int_{\mathbb{R}^n}(W_k)_+^{\cx}(x,0)\,dx }\\ 
&\quad +  \eps\lr{ \frac{1}{2}-\frac1{q+1} }\int_{\mathbb{R}^n}{h(x)\left[(U_k)_+^{q}(x,0)V_k(x,0) - (V_k)_+^{q+1}(x,0)\right]\,dx} \\ 
&\quad + \eps\lr{ \frac{1}{2}-\frac1{q+1} }\int_{\mathbb{R}^n}{h(x)\left[(U_k)_+^{q}(x,0)W_k(x,0) - (W_k)_+^{q+1}(x,0)\right]\,dx} -C\varepsilon^{\alpha/\gamma}.
} 
Using \eqref{ealpha}, reasoning in the same way for the term with $W_k$ \textcolor{black}{and recalling that $1+\alpha>\alpha/\gamma$ we get that} 
\begin{equation*}\begin{split}
\mathcal{F}_\varepsilon(U_k) \ge  &\, \mathcal{F}_\varepsilon(V_k)+\mathcal{F}_\varepsilon(W_k)\\
&\quad +\frac{s}n \int_{\mathbb{R}^n}\left(
(U_k)_+^{2^*_s}(x,0)-(V_k)_+^{2^*_s}(x,0)-(W_k)_+^{2^*_s}(x,0)\right)\,dx-C\varepsilon^{\alpha/\gamma}\\
= &\, \mathcal{F}_\varepsilon(V_k)+\mathcal{F}_\varepsilon(W_k)\\
&\quad +\frac{s}n \int_{\mathbb{R}^n}(U_k)_+^{2^*_s}(x,0)\left(1-\chi^{2^*_s}(x,0)-(1-\chi(x,0))^{2^*_s}\right)\,dx-C\varepsilon^{\alpha/\gamma},
\end{split}\end{equation*}
where~\eqref{3.4ter} was used in the last line. 
Also, since $\cx>2$ and 
\begin{equation}\label{chipos} 1-\chi^{\cx}(x,0)-(1-\chi(x,0))^{\cx}\ge0 \quad {\mbox{ for any }}x\in\R^n,
\end{equation}
we get
\begin{equation*}\begin{split}
\mathcal{F}_\eps(U_k) \ge  &\mathcal{F}_\eps(V_k)+\mathcal{F}_\eps(W_k) -C\eps^{\alpha/\gamma}. 
\end{split}\end{equation*}
This, \eqref{LowerBoundV} and \eqref{LowBoundFW} imply that 
\begin{equation*}
\mathcal{F}_\eps(U_k)\geq 
\frac{s}{n}S^{n/2s}-c_1\eps^{\alpha/\gamma}- \overline C \eps^{\frac{\cx}{\cx-(q+1)}}+o_k(1).
\end{equation*}
Hence, taking the limit as $k\to+\infty$ we obtain that 
$$ c_\eps=\lim_{k\to+\infty}\mathcal{F}_\eps(U_k)\geq 
\frac{s}{n}S^{n/2s}-c_1\eps^{\alpha/\gamma}  -\overline C \eps^{\frac{\cx}{\cx-(q+1)}},$$
which is a contradiction with assumption (i) of Theorem \ref{PSCthm}. 
This concludes the proof of Lemma \ref{tightness} in the case $n\geq 6s$.
\medskip

Consider now $n\in (2s,6s)$. In such a case, 
one easily sees that
\begin{equation*}\begin{split}
\mathcal{F}_\eps(V_k)&-\frac{1}{2}\langle \mathcal{F}'_\eps(V_k), V_k\rangle 
= \left(\frac{1}{2}-\frac{1}{\cx}\right)\|(V_k)_+(\cdot,0)\|_{\Lst}^{\cx}\\
&\;\;+\eps\left(\frac{1}{2}-\frac{1}{q+1}\right)\int_{\R^n}h(x)(V_k)_+^{q+1}(x,0)\,dx\\
&\geq \eps\left(\frac{1}{2}-\frac{1}{q+1}\right)\int_{\R^n}h(x)(V_k)_+^{q+1}(x,0)\,dx,
\end{split}\end{equation*}
and by \eqref{fprimeV} we get that
\begin{equation}\label{LowerBoundV2}
\mathcal{F}_\varepsilon(V_k)\geq \eps\left(\frac{1}{2}-\frac{1}{q+1}\right)\int_{\R^n}h(x)(V_k)_+^{q+1}(x,0)\,dx-C\eps^{\alpha/\gamma}+o_k(1).
\end{equation}
On the other hand, proceeding analogously to the previous case \textcolor{black}{(check \eqref{swkbla})}, we obtain
\begin{eqnarray*}
\mathcal{F}_\varepsilon(W_k)-\frac{1}{2}\langle \mathcal{F}'_\varepsilon(W_k),W_k\rangle
&=&\frac{s}{n}\|(W_k)_+(\cdot,0)\|_{L^{\cx}(\mathbb{R}^n)}^{\cx}\\
&&\qquad +\varepsilon\left(\frac{1}{2}-\frac{1}{q+1}\right)
\int_{\mathbb{R}^n}{h(x)(W_k)_+^{q+1}(x,0)\,dx}\\
&\geq&\frac{s}{n}S^{n/2s}+\varepsilon\left(\frac{1}{2}-\frac{1}{q+1}\right)
\int_{\mathbb{R}^n}{h(x)(W_k)_+^{q+1}(x,0)\,dx}\\
&&\qquad -C\varepsilon^{\alpha/\gamma}+o_k(1).
\end{eqnarray*}
Thus, using also \eqref{fprimeW}, we get 
\begin{equation}\label{LowBoundFW2}
\mathcal{F}_\varepsilon(W_k)\geq \frac{s}{n}S^{n/2s}+\varepsilon\left(\frac{1}{2}-\frac{1}{q+1}\right)
\int_{\mathbb{R}^n}{h(x)(W_k)_+^{q+1}(x,0)\,dx}-C\varepsilon^{\alpha/\gamma}+o_k(1).
\end{equation}
Now, using the positivity of $h$, from \eqref{qwqweteyryhg} and \eqref{chipos} we get
\bgs{
\mathcal{F}_\eps(U_k) \ge &\, \mathcal{F}_\eps(V_k)+\mathcal{F}_\eps(W_k)\\ 
&\quad+\bigg(\frac12 - \frac{1}{\cx}\bigg) \lrq{\int_{\mathbb{R}^n}(U_k)_+^{\cx}(x,0)\,dx  -\int_{\mathbb{R}^n}(V_k)_+^{\cx}(x,0)\,dx 
-\int_{\mathbb{R}^n}(W_k)_+^{\cx}(x,0)\,dx 
 }\\ 
&\quad -  \eps\lr{ \frac{1}{2}-\frac1{q+1} }\int_{\mathbb{R}^n}{h(x)\left[(V_k)_+^{q+1}(x,0)+(W_k)_+^{q+1}(x,0)\right]\,dx}  -C\varepsilon^{\alpha/\gamma}\\
\ge &\, \mathcal{F}_\eps(V_k)+\mathcal{F}_\eps(W_k)\\ 
&\quad -  \eps\lr{ \frac{1}{2}-\frac1{q+1} }\int_{\mathbb{R}^n}{h(x)\left[(V_k)_+^{q+1}(x,0)+(W_k)_+^{q+1}(x,0)\right]\,dx}  -C\varepsilon^{\alpha/\gamma},\\
\ge &\, \frac{s}{n}S^{n/2s}-C\varepsilon^{\alpha/\gamma}+o_k(1),
} 
where we have used \eqref{LowerBoundV2} and \eqref{LowBoundFW2} in the last line. Passing to the limit as $k\rightarrow\infty$ we reach a contradiction with assumption (i) of Theorem \ref{PSCthm} and thus we finish the proof of Lemma \ref{tightness} in the case $n\in (2s,6s)$.
\end{proof}

\noindent Knowing that the sequence $\{U_k\}_{k\in \N}$ is bounded and tight, one can use the Concentration Compactness principle and prove Theorem \ref{PSCthm}. More precisely, one applies Proposition \ref{CCP} for the positive sequence $\{\ukp\}_{k\in \N}$, which is also bounded and tight, to obtain that
\begin{equation*}\begin{split}
\ukp^{2^*_s}(\cdot,0)&\xrightarrow[k\to +\infty]{} \nu=\overline{U}^{2^*_s}(\cdot,0)+\sum \nu_j\delta_{x_j},\\
y^a|\nabla \ukp|^2&\xrightarrow[k\to +\infty] {}\mu\geq y^a|\nabla\overline{U}|^2+\sum\mu_j\delta_{(x_j,0)},
\end{split}\end{equation*}
and then, following the steps in \cite[Proof of Proposition 4.2.1]{maria} and using Proposition \ref{proppqs}, one deduces $\nu_j=\mu_j=0$ for every $j$. Finally, proceeding as in \cite[Proposition 4.2.1]{maria} (using Proposition \ref{convres} instead of  \cite[Lemma 4.1.1]{maria}) the strong convergence in $\dot{H}^s_a(\mathbb{R}^{n+1}_+)$ follows, and thus Theorem \ref{PSCthm} holds. 

\section{Bound on the minmax value and geometry of the functional}\label{minmax}
\noindent The purpose of this section is to show that the minmax value of the Mountain Pass Lemma lies below the critical threshold given in Theorem \ref{PSCthm}. To see this, the idea is to find a path where the maximum value of the functional is smaller than this critical level (and so the infimum of the maximums along all opportune paths, i.e., the associated minmax value). We obtain such path by working with the fractional Sobolev minimizers, explicitly computed in formula \eqref{SobMin}.\\
One considers, as done in \cite[Section 6.5]{maria}, the ball $B$ given in \eqref{h1}  and takes $\mu_0 >0$ and $\xi \in \Rn$  to be the radius and the center of $B$ respectively. Namely, one has that 
\[\inf_{B_{\mu_0}(\xi)} h >0.\] 
Let $\bar \phi \in C_0^{\infty}(B_{\mu_0}(\xi),[0,1])$ be a cut-off function such that $\bar \phi(x) = 1$ in $B_{\frac{\mu_0}2}(\xi)$. Translating and rescaling the function $z$ in \eqref{SobMin} we define 
\eqlab{\label{zmmu} z_{\mu,\xi}(x):=\mu^{\frac{2s-n}2}z\lr{ \frac{x-\xi}{\mu}},\qquad \mu>0.} 
Let $\bar Z_{\mu,\xi}$ be the extension of $\bar \phi z_{\mu,\xi}$, as defined in  \eqref{extprc}. With some manipulations (check Section \textcolor{black}{6.5} in \cite{maria}), one has that
\eqlab{\label{zsmal1} \|z_{\mu,\xi}\|_{\Lst}^2 =S^{\frac{n-2s}{4s}}} 
and that
\eqlab{\label{zbar1} [\bar Z_{\mu,\xi} ]_a^2 = [\bar \phi z_{\mu,\xi} ]^2_{\dot H^s(\Rn)} \leq S^{\frac{n}{2s}}+C\mu^{n-2s}.}
Moreover, we have the following result.
\begin{lem} There exists $C=C(n,s,\mu_0)>0$ such that
\label{lemzm} \[\|\bar Z_{\mu, \xi} (\cdot, 0)\|_{\Lst}^{\cx} \geq \|z_{\mu,\xi}\|_{\Lst}^{\cx} -C\mu^n.\]
\end{lem}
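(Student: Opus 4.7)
My plan is to exploit the fact that the extension operator is linear with boundary trace equal to the boundary datum, so that $\bar Z_{\mu,\xi}(\cdot,0) = \bar\phi\, z_{\mu,\xi}$ pointwise. Since $\bar\phi \in [0,1]$, we have $(\bar\phi z_{\mu,\xi})^{\cx} = \bar\phi^{\cx} z_{\mu,\xi}^{\cx}$, and thus
\[
\|\bar Z_{\mu,\xi}(\cdot,0)\|_{\Lst}^{\cx} = \int_{\Rn} z_{\mu,\xi}^{\cx}\, dx - \int_{\Rn} (1-\bar\phi^{\cx})\, z_{\mu,\xi}^{\cx}\, dx.
\]
So the lemma reduces to showing that
\[
\int_{\Rn}(1-\bar\phi^{\cx})\, z_{\mu,\xi}^{\cx}\, dx \leq C\mu^n.
\]

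Next, I would use the support properties of $\bar\phi$: since $\bar\phi \equiv 1$ on $B_{\mu_0/2}(\xi)$, the integrand vanishes there, and since $0\leq 1-\bar\phi^{\cx}\leq 1$ everywhere, it suffices to estimate the tail
\[
\int_{\Rn \setminus B_{\mu_0/2}(\xi)} z_{\mu,\xi}^{\cx}(x)\, dx.
\]
Here I would perform the change of variables $y=(x-\xi)/\mu$. Using the definition \eqref{zmmu} and the fact that $\cx\cdot (2s-n)/2=-n$, one sees that $z_{\mu,\xi}^{\cx}(x)\, dx = z^{\cx}(y)\, dy$; this is the usual $\cx$-scale invariance of the Sobolev minimizer. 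Therefore
\[
\int_{\Rn\setminus B_{\mu_0/2}(\xi)} z_{\mu,\xi}^{\cx}(x)\, dx = \int_{\Rn\setminus B_{\mu_0/(2\mu)}} z^{\cx}(y)\, dy.
\]

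Finally, I would use the explicit form \eqref{SobMin} of $z$: passing to polar coordinates and using that $(1+r^2)^{-n}\leq r^{-2n}$ for $r\geq 1$, the tail is bounded (for $\mu$ small enough that $\mu_0/(2\mu)\geq 1$) by
\[
c_\star^{\cx}|S^{n-1}|\int_{\mu_0/(2\mu)}^{+\infty} \frac{r^{n-1}}{(1+r^2)^n}\, dr \leq C\int_{\mu_0/(2\mu)}^{+\infty} r^{-n-1}\, dr \leq C'\mu^n,
\]
with $C'=C'(n,s,\mu_0)$. For $\mu$ in any bounded range away from $0$ the integral is uniformly controlled by $\|z\|_{\Lst}^{\cx}$ and the bound $C\mu^n$ holds trivially after adjusting the constant. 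I do not expect any real obstacle: the whole computation is a straightforward tail estimate on the Aubin--Talenti profile, and the only minor care needed is to treat separately the small-$\mu$ and bounded-$\mu$ regimes to absorb everything into a single constant $C=C(n,s,\mu_0)$.
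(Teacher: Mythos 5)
Your proof is correct and follows essentially the same route as the paper's: identify $\bar Z_{\mu,\xi}(\cdot,0)=\bar\phi z_{\mu,\xi}$, subtract the truncated $L^{\cx}$-norm from the full one, use the support property of $\bar\phi$ to reduce to the tail $\int_{\Rn\setminus B_{\mu_0/2}(\xi)} z_{\mu,\xi}^{\cx}$, rescale by $y=(x-\xi)/\mu$, and bound $z^{\cx}(y)\le c_\star^{\cx}|y|^{-2n}$ to get the $C\mu^n$ decay. The only (minor) difference is that you explicitly note the separate treatment of the bounded-$\mu$ regime, which the paper leaves implicit.
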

\begin{proof}
In the next computations, the constant may change value from line to line. Using that  $\bar\phi=1$ on $B_{{\mu_0}/2}(\xi)$, we have that
\bgs{ \|z_{\mu,\xi}\|_{\Lst}^{\cx}-\|\bar Z_{\mu,\xi}(\cdot,0)\|_{\Lst}^{\cx} =&\; \int_{\Rn}(1-\bar \phi^{\cx})z_{\mu,\xi}^{\cx}\, dx   = \int_{\Rn \setminus B_{\frac{\mu_0}2}(\xi) } (1-\bar \phi^{\cx})z_{\mu,\xi}^{\cx}\, dx \\
   \leq &\; \mu^{-n} \int_{\Rn \setminus B_{\frac{\mu_0}2}(\xi)} z^{\cx}\lr{\frac{x-\xi}\mu} \, dx .}
   Making the change of variable $y= (x-\xi)/\mu$ and inserting definition \eqref{SobMin} we get
   \bgs{ \|z_{\mu,\xi}\|_{\Lst}^{\cx}-\|\bar Z_{\mu,\xi}(\cdot,0)\|_{\Lst}^{\cx}   = \int_{\Rn\setminus B_{\frac{\mu_0}{2\mu}}} z^{\cx}(y) \, dy 
    \leq c_{\star}^{\cx}\int_{\Rn\setminus B_{\frac{\mu_0}{2\mu}}} |y|^{-2n}\, dy \leq C \mu^n,} where $C$ depends on $n,s, \mu_0$.
   This proves the lemma.
\end{proof}

\noindent Let $t> 0$. We consider the path $t\bar Z_{\mu,\xi}$ and compute the energy along it. Namely, we focus on  obtaining an upper bound for 
\[ \Fl (t\bar Z_{\mu,\xi}) = \frac{t^2}2 [\bar Z_{\mu,\xi}]_a^2 - \frac{t^{\cx}}{\cx} \|\bar Z_{\mu,\xi}(\cdot,0)\|_{\Lst}^{\cx} - \frac{\eps}{q+1}\int_{\Rn} h(x) \lr{ t\bar Z_{\mu,\xi}(x,0)}^{q+1}\, dx  \] \textcolor{black}{and proving that it stays below the critical threshold  given in Theorem \ref{PSCthm}. Of course, if $t=0$ the energy level is zero, and  for $\eps$ small enough, this is trivially fulfilled.}
We introduce  the following Lemmata.
\begin{lem}\label{lemqp1}
Let $n>\frac{2s(q+1)}{q}$. There exists $\mu^{\star}<\mu_0/2$ such that for any $t> 0$ and any $\mu\in(0,\mu^\star)$  
\bgs{\label{cl3} \int_{\Rn} h(x) \lr{ t \bar Z_{\mu,\xi}(x,0)}^{q+1} \, dx \geq C\; t^{q+1}\;  \mu^{\frac{(2s-n)(q+1)}2+n}  ,}
where $C=C(n,s,h,\mu_0,\mu^\star)$ is a positive constant.\end{lem}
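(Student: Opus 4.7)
The plan is to bound the integral below by restricting to the core region where $\bar\phi$ equals $1$ and $h$ is bounded below by a positive constant, then to rescale and use the integrability of $z^{q+1}$ on $\Rn$ under the dimensional hypothesis.

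More precisely, since $\bar\phi\equiv 1$ on $B_{\mu_0/2}(\xi)$ and since $B_{\mu_0/2}(\xi)\subset B_{\mu_0}(\xi)=B$ where $h\geq \inf_B h>0$ by \eqref{h1}, I would first write
\[
\int_{\Rn} h(x)\bigl(t\bar Z_{\mu,\xi}(x,0)\bigr)^{q+1}dx
\;\geq\; t^{q+1}\,(\inf_B h)\int_{B_{\mu_0/2}(\xi)} z_{\mu,\xi}^{q+1}(x)\,dx,
\]
using that the integrand is pointwise nonnegative elsewhere. Then, plugging in the definition \eqref{zmmu} and performing the change of variables $y=(x-\xi)/\mu$, the integral on the right becomes
\[
\mu^{\frac{(2s-n)(q+1)}{2}+n}\int_{B_{\mu_0/(2\mu)}} z^{q+1}(y)\,dy.
\]

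The main (and only non-routine) observation is that the remaining integral stays uniformly bounded below as $\mu\to 0^+$. Using \eqref{SobMin}, we have $z^{q+1}(y)\sim c_\star^{q+1}|y|^{-(n-2s)(q+1)}$ at infinity, so $z^{q+1}\in L^1(\Rn)$ exactly when $(n-2s)(q+1)>n$, that is, when $n>\frac{2s(q+1)}{q}$, which is precisely the hypothesis. Hence $\int_{\Rn} z^{q+1}(y)\,dy=:I_0$ is finite and strictly positive, and by monotone convergence there exists $\mu^\star\in(0,\mu_0/2)$ such that for every $\mu\in(0,\mu^\star)$
\[
\int_{B_{\mu_0/(2\mu)}} z^{q+1}(y)\,dy\;\geq\;\tfrac12\, I_0.
\]

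Combining these two estimates gives
\[
\int_{\Rn} h(x)\bigl(t\bar Z_{\mu,\xi}(x,0)\bigr)^{q+1}dx
\;\geq\; \tfrac12 (\inf_B h)\, I_0\, t^{q+1}\,\mu^{\frac{(2s-n)(q+1)}{2}+n},
\]
which is the claim with $C=\tfrac12(\inf_B h) I_0$, depending only on $n,s,h,\mu_0$ and $\mu^\star$. I do not anticipate any serious obstacle: the only delicate point is checking that the integrability threshold for $z^{q+1}$ matches the stated dimensional assumption $n>\frac{2s(q+1)}{q}$, and this is a direct computation on the tail of $z$.
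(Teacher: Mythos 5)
Your proof is correct and follows the paper's approach almost exactly: restrict to $B_{\mu_0/2}(\xi)$ where $\bar\phi\equiv1$ and $h\ge\inf_B h>0$, rescale via $y=(x-\xi)/\mu$, and observe that the remaining dilated integral $\int_{B_{\mu_0/(2\mu)}}z^{q+1}\,dy$ is bounded below uniformly for $\mu<\mu^\star$. The only cosmetic difference lies in the final step, where the paper obtains this lower bound by an explicit polar-coordinate computation of the tail, whereas you appeal to $z^{q+1}\in L^1(\Rn)$ (equivalent, as you check, to $n>2s(q+1)/q$) together with monotone convergence of $\int_{B_R}z^{q+1}$ to $\|z\|_{L^{q+1}(\Rn)}^{q+1}$.
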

\begin{proof}
Notice that since $q<\cx-1$, we have that $ {\frac{(2s-n)(q+1)}2+n} >0$. 
 \textcolor{black}{Given the definition of $\bar \phi$
 we have that}
\eqlab{ \label{bla12}\int_{\Rn} h(x) \lr{ t \bar Z_{\mu,\xi}(x,0)}^{q+1} \, dx  =&\;  \int_{\Rn} h(x) t^{q+1} \bar \phi^{q+1} z_{\mu,\xi}^{q+1} (x)\, dx \\=&\;\int_{B_{\mu_0}(\xi)}h(x) t^{q+1} \bar \phi^{q+1} z_{\mu,\xi}^{q+1} (x)\, dx  \\ \geq &\; t^{q+1} \inf_{B_{\mu_0}(\xi)} h  \, \int_{B_{\frac{\mu_0}2}(\xi)} z_{\mu,\xi}^{q+1}(x)\, dx,}
 recalling also that $\inf_{B_{\mu_0}(\xi)} h$ is positive. Thus, using \eqref{zmmu}, changing the variable $y=(x-\xi)/\mu$ and inserting definition \eqref{SobMin} we obtain that
\bgs{  \int_{B_{\frac{\mu_0}2}(\xi)} z^{q+1}_{\mu,\xi}(x)\, dx =&\; \mu^{\frac{(2s-n)(q+1)}2+n} \int_{B_{\frac{\mu_0}{2\mu}}} z^{q+1}(y)\, dy \\=&\;   \textcolor{black}{C}\mu^{\frac{(2s-n)(q+1)}2+n}  \int_{B_{\frac{\mu_0}{2\mu}}} (1+|y|^2)^{\frac{(2s-n)(q+1)}2} \, dy,}\textcolor{black}{where $C=C(n,s)>0$.}
Passing to polar coordinates and taking $\mu$ small enough, say $\mu<\mu_0/2$ we get that
\bgs{  \int_{B_{\frac{\mu_0}{2\mu}}}(1+|y|^2)^{\frac{(2s-n)(q+1)}2} \, dy\geq &\; \textcolor{black}{ \int_{B_{\frac{\mu_0}{2\mu}}}|y|^{(2s-n)(q+1)}\, dy} \\
 \geq   &\; c_{n,s} \int_1^{\frac{\mu_0}{2\mu}} \rho^{{(2s-n)(q+1)}}  \rho^{n-1} \, d\rho \\ 
=&\; c_{n,s} \frac{\lr{\frac{\mu_0}{2\mu}}^{(2s-n)(q+1)+n}-1}{(2s-n)(q+1)+n}   .}
We have that $(2s-n)(q+1) +n <0$ and renaming the constants we obtain that
\bgs{  \int_{B_{\frac{\mu_0}{2\mu}}}(1+|y|^2)^{\frac{(2s-n)(q+1)}2} \, dy  \geq c_{n,s} \lr{\mu_0^{(n-2s)(q+1)- n}-(2 \mu)^{(n-2s)(q+1)- n} } \geq  C_{n,s,\mu_0,\mu^\star}, }  for any  $\mu \in  (0,\mu^{\star})$ , $\mu^{\star}<\mu_0/2$, where $C_{n,s,\mu_0,\mu^\star}$ designates a positive constant. Hence 
\bgs{  \int_{B_{\frac{\mu_0}2}(\xi)} z^{q+1}_{\mu,\xi}(x)\, dx \geq  C_{n,s,\mu_0,\mu^\star} \mu^{\frac{(2s-n)(q+1)}2+n},}
and from \eqref{bla12} it follows
\bgs{ \int_{\Rn} h(x) \lr{ t \bar Z_{\mu,\xi}(x,0)}^{q+1} \, dx   \geq &\;C  t^{q+1} \mu^{\frac{(2s-n)(q+1)}2+n} ,}
where $C$ is a positive constant that depends on $n,s, h, \mu_0$ and $\mu^\star$. 
\end{proof}

\noindent Let $\mu^\star$ be fixed as in Lemma \ref{lemqp1}. We want to prove now that the energy level along the path induced by $t\bar Z_{\mu,\xi}$, $t>0$, stays below the critical threshold  given in Theorem \ref{PSCthm} for $\mu<\mu^*$. With this purpose, we state the next result.
\begin{lem}\label{haha}
There exists $\mu_1 \in(0, \mu_0)$ such that
\eqlab{ \label{cl1} \lim_{t\to +\infty} \sup_{\mu \in (0,\mu_1)}\Fl(t\bar Z_{\mu,\xi}) =-\infty.}
Furthermore, if $n>\frac{2s(q+3)}{q+1}$, for any $\mu \in ( 0, \min\{\mu^\star,\mu_1\})$
\eqlab{ \label{cl2} \sup_{t\geq 0} \Fl (t \bar Z_{\mu,\xi}) <\frac{s}n S^{\frac{n}{2s}} +C_1 \mu^{ n-2s} + o(\mu^{n-2s})  - C_3  \eps\mu^{\frac{(2s-n)(q+1)}2+n},}
where $\mu^*$ was given in Lemma \ref{lemqp1}.
\end{lem}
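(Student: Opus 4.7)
The plan is twofold: first prove \eqref{cl1} by exploiting the uniform Sobolev bounds on $\bar Z_{\mu,\xi}$, then derive \eqref{cl2} by analyzing the one-variable maximization in $t$ using the lower bound from Lemma \ref{lemqp1}, with special care taken to control the location of the maximizer.

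For \eqref{cl1}, inequality \eqref{zbar1} together with Lemma \ref{lemzm} yields, after choosing $\mu_1>0$ sufficiently small, the uniform bounds $[\bar Z_{\mu,\xi}]_a^2\leq 2S^{n/(2s)}$ and $\|\bar Z_{\mu,\xi}(\cdot,0)\|_{\Lst}^{\cx}\geq\tfrac12 S^{n/(2s)}$ valid for every $\mu\in(0,\mu_1)$. Since $0\leq\bar\phi\leq 1$, one has $\|\bar Z_{\mu,\xi}(\cdot,0)\|_{\Lst}\leq\|z_{\mu,\xi}\|_{\Lst}=S^{(n-2s)/(4s)}$, so by Proposition \ref{qineq} the $h$-term is bounded by a constant $K$ independent of $\mu$. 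Hence
\[ \Fl(t\bar Z_{\mu,\xi})\leq t^2 S^{n/(2s)}-\frac{t^{\cx}}{2\cx}S^{n/(2s)}+\frac{\eps K}{q+1}t^{q+1}, \]
and since $\cx>\max\{2,q+1\}$, the right-hand side tends to $-\infty$ uniformly in $\mu\in(0,\mu_1)$, establishing \eqref{cl1}.

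For \eqref{cl2}, \eqref{cl1} together with $\Fl(0)=0$ implies the supremum is attained at some $t_\eps=t_\eps(\mu)>0$. Combining the definition of $\Fl$ with Lemma \ref{lemqp1} yields
\[ \Fl(t\bar Z_{\mu,\xi})\leq\eta_\mu(t)-\frac{C_h\eps}{q+1}\,t^{q+1}\mu^{\alpha}, \qquad\alpha:=n-\frac{(n-2s)(q+1)}{2}, \]
where $\eta_\mu(t):=\tfrac12 A_\mu t^2-\tfrac{1}{\cx}B_\mu t^{\cx}$ with $A_\mu:=[\bar Z_{\mu,\xi}]_a^2$ and $B_\mu:=\|\bar Z_{\mu,\xi}(\cdot,0)\|_{\Lst}^{\cx}$. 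A direct computation gives $\max_{t\geq 0}\eta_\mu(t)=(s/n)\,A_\mu^{n/(2s)}/B_\mu^{(n-2s)/(2s)}$, which upon plugging in \eqref{zbar1}, Lemma \ref{lemzm} and Taylor expanding produces
\[ \sup_{t\geq 0}\eta_\mu(t)\leq\frac{s}{n}S^{n/(2s)}+C_1\mu^{n-2s}+o(\mu^{n-2s}). \]

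It remains to show that $t_\eps^{q+1}$ is bounded below by a positive constant uniformly in $\mu\in(0,\min\{\mu^\star,\mu_1\})$ for $\eps$ small. Writing the critical point condition $\tfrac{d}{dt}\Fl(t\bar Z_{\mu,\xi})|_{t=t_\eps}=0$ and dividing by $t_\eps$ gives
\[ A_\mu=t_\eps^{\cx-2}B_\mu+\eps\, t_\eps^{q-1}\int_{\Rn}h(x)\bar Z_{\mu,\xi}^{q+1}(x,0)\,dx. \]
Proposition \ref{qineq} bounds the integral above by a constant $\bar I$ independent of $\eps$ and $\mu$. Distinguishing the cases $t_\eps\geq 1$ (where $t_\eps^{q+1}\geq 1$ is automatic) and $t_\eps<1$ (where $q\geq 1$ forces $t_\eps^{q-1}\leq 1$) we deduce $t_\eps^{\cx-2}\geq A_\mu/(2B_\mu)\geq c_0>0$ as soon as $\eps\leq A_\mu/(2\bar I)$, and hence $t_\eps^{q+1}\geq\tilde c>0$ in both cases. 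Combining,
\[ \sup_{t\geq 0}\Fl(t\bar Z_{\mu,\xi})\leq \frac{s}{n}S^{n/(2s)}+C_1\mu^{n-2s}+o(\mu^{n-2s})-C_3\eps\mu^\alpha, \]
with $C_3:=C_h\tilde c/(q+1)$, which is precisely \eqref{cl2}. The hypothesis $n>2s(q+3)/(q+1)$ is equivalent to $\alpha<n-2s$ and ensures that the correction $-C_3\eps\mu^\alpha$ is of strictly larger order than $\mu^{n-2s}$, so it cannot be absorbed into the $o(\mu^{n-2s})$ error; this is what makes the bound exploitable in the next section. The principal difficulty is precisely the uniform lower bound on $t_\eps$: without it, the gain from Lemma \ref{lemqp1} cannot be converted into the final negative term $-C_3\eps\mu^\alpha$.
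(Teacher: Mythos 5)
Your proposal is correct and follows essentially the same strategy as the paper: split the energy into a principal part $\eta_\mu(t)=\tfrac12 A_\mu t^2-\tfrac1{\cx}B_\mu t^{\cx}$ plus the negative $h$-correction, bound the maximum of $\eta_\mu$ via the one-variable calculus and the estimates \eqref{zbar1}--\eqref{zsmal1}, Lemma \ref{lemzm}, and then extract the uniform gain $-C_3\eps\mu^\alpha$ by proving the maximizer of $t\mapsto\Fl(t\bar Z_{\mu,\xi})$ is bounded away from $0$ uniformly in $\mu$. The paper arrives at the same structure after factoring out $S^{n/2s}$: it writes $\Fl(t\bar Z)\le S^{n/2s}g(t)$, locates the unique critical point $t_\mu\in(0,\alpha)$ of $g$ by a monotonicity argument, uses that $F$ (your $\eta_\mu$, normalized) is increasing on $(0,\alpha)$ to bound $F(t_\mu)\le F(\alpha)$, and obtains $t_\mu\ge\delta>0$ directly from the critical point identity $1<1+C_1\mu^{n-2s}<t_\mu^{\cx-2}+C_3 t_\mu^{q-1}$, which only requires $\eps<1$. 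Your route to the lower bound on $t_\eps$ is slightly different (splitting $t_\eps\ge1$ / $t_\eps<1$ and requiring $\eps\le A_\mu/(2\bar I)$), and it works, but you should make explicit that $A_\mu=[\bar Z_{\mu,\xi}]_a^2\ge S\,B_\mu^{2/\cx}\ge c>0$ and $B_\mu\le S^{n/(2s)}$ uniformly for small $\mu$, so that the $\eps$-smallness threshold and the constant $c_0$ are genuinely independent of $\mu$; the paper's identity-based argument sidesteps this extra check. Everything else (the formula $\max_t\eta_\mu=\tfrac{s}{n}A_\mu^{n/(2s)}B_\mu^{-(n-2s)/(2s)}$, the Taylor expansion, and the observation that $n>2s(q+3)/(q+1)$ is equivalent to $\alpha<n-2s$) is accurate and matches the paper.
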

\begin{proof}

Thanks to \eqref{zbar1}, \eqref{zsmal1} and Lemma \ref{lemzm} we have that
\eqlab{ \label{bla11}   \frac{t^2}2 [\bar Z_{\mu,\xi}]^2_a - \frac{t^{\cx}}{\cx} \|\bar Z_{\mu,\xi}(\cdot,0)\|_{\Lst}^{\cx}  \leq &\;\frac{t^2}2 \lr{S^{\frac{n}{2s}} +C_1\mu^{n-2s} } -\frac{t^{\cx}}{\cx} \lr{\|z_{\mu,\xi}\|_{\Lst}^{\cx}  -C_2\mu^n} \\
\leq  &\; \frac{t^2}2 \lr{S^{\frac{n}{2s}} +C_1\mu^{n-2s} }  -  \frac{t^{\cx}}{\cx} \lr{S^{\frac{n}{2s}} -C_2\mu^n}.}
\textcolor{black}{From \eqref{bla12} it follows that for any $\mu\in(0,\mu^*)$}
\[\int_{\Rn} h(x) \lr{ t\bar Z_{\mu,\xi}(x,0)}^{q+1}\, dx  \geq 0,\] and therefore
\bgs{ \Fl (t\bar Z_{\mu,\xi}) = &\;\frac{t^2}2 [\bar Z_{\mu,\xi}]^2_a - \frac{t^{\cx}}{\cx} \|\bar Z_{\mu,\xi}(\cdot,0)\|_{\Lst}^{\cx} - \frac{\eps}{q+1}\int_{\Rn} h(x) \lr{ t\bar Z_{\mu,\xi}(x,0)}^{q+1}\, dx \\
 \leq&\; \frac{t^2}2 \lr{S^{\frac{n}{2s}} +C_1\mu^{n-2s} }  -  \frac{t^{\cx}}{\cx} \lr{S^{\frac{n}{2s}} -C_2\mu^n}.  } 
Now, there exists $\mu_1\in(0,\mu_0)$ small enough such that $S^{n/2s}-C_2\mu^n$ is positive and hence, sending $t $ to $+\infty$ and recalling that $\cx>2$, we obtain
\[ \lim_{t\to +\infty} \Fl (t\bar Z_{\mu,\xi})=-\infty\] for any $\mu \in (0,\mu_1)$. This proves \eqref{cl1}. 

To obtain \eqref{cl2}, we use \eqref{bla11} and taking any  $\mu \in ( 0, \min\{\mu^\star,\mu_1\})$, by Lemma \ref{lemqp1} we have that
\bgs{ \Fl (t\bar Z_{\mu,\xi})   \leq \frac{t^2}2 \lr{S^{\frac{n}{2s}} +C_1\mu^{n-2s} }  -  \frac{t^{\cx}}{\cx} \lr{S^{\frac{n}{2s}} -C_2\mu^n} -C_3 \eps \frac{t^{q+1}}{q+1} \mu^{\frac{(2s-n)(q+1)}2+n} .} By renaming the constants, we obtain
\eqlab{\label{fltx}    \Fl (t\bar Z_{\mu,\xi}) \leq  S^{\frac{n}{2s}} g(t),}
where 
\[g(t):=  \frac{t^2}2 \lr{1 +C_1\mu^{n-2s} }  -  \frac{t^{\cx}}{\cx} \lr{1 -C_2\mu^n} - C_3\eps \frac{t^{q+1}}{q+1}   \mu^{\frac{(2s-n)(q+1)}2+n}   .\] We compute the first derivative of $g$ and have that
\eqlab{ \label{blaaa} g'(t)= t\lrq{ (1+C_1 \mu^{n-2s}) -t^{\cx-2} (1-C_2\mu^n) -C_3 \eps t^{q-1} \mu^{\frac{(2s-n)(q+1)}2+n}} .}
Let 
$$f(t):= (1+C_1 \mu^{n-2s}) -t^{\cx-2} (1-C_2\mu^n)\quad  \hbox{ and } \quad h(t):= C_3 \eps t^{q-1} \mu^{\frac{(2s-n)(q+1)}2+n}.$$ 
Looking for a critical point of $g$ is equivalent to looking for a solution of $f(t)=h(t)$. We notice that $f(t)=0$ has the solution
\[ \alpha =\lr{ \frac{1+C_1 \mu^{n-2s}} { 1-C_2\mu^n}}^{\frac{n-2s}{4s}},\] which is positive for $\mu\in(0,\mu_1)$. Moreover, $f$ is strictly decreasing on $(0,+\infty)$ for any $\mu \in (0,\mu_1) $, $h$ is strictly increasing  on $(0,+\infty)$ (recalling that $q\textcolor{black}{\geq}1$) and 
\bgs{ f(0)>0, \quad  f(\alpha)=0 , \quad \mbox{and} \quad h(0)=0 , \quad h(\alpha)>0.}
From this it follows that there exists (and is unique) $t_{\mu} \in (0,\alpha)$ such that $f(t_{\mu})=h(t_{\mu})$ (hence $g'(t_{\mu})=0$). Notice also that $g'(t)>0$ on $(0,t_{\mu})$ and   $g'(t)<0$ on $(t_{\mu},+\infty)$. This implies that $g(t_{\mu})$ is a maximum. 
Now, denoting by 
\[F(t):= \frac{t^2}2 \lr{1 +C_1\mu^{n-2s} }  -  \frac{t^{\cx}}{\cx} \lr{1 -C_2\mu^n} \]
we have that $F'(t)= tf(t) >0$ on $(0,\alpha)$, hence $F(t_{\mu})\leq F(\alpha)$. 
  On the other hand, $t_{\mu}>0$ and there exists $\delta >0$ independent on $\eps$ and $ \mu$ such that $t_{\mu}\geq \delta$.
  Indeed, since $g'(t_{\mu})=0$, one has from \eqref{blaaa} that
  \[ 1<1 + C_1 \mu^{n-2s}  = t_{\mu}^{\cx-2}  (1-C_2\mu^n)+ C_3 \eps t_{\mu}^{q-1} \mu^{\frac{(2s-n)(q+1)}2+n} <t_{\mu}^{\cx-2}  + C_3 t_{\mu}^{q-1} \] for any $\mu \in (0,\mu_1) $ and  $\eps \in (0,1)$ and this implies the claim. 
  And so by renaming $C_3$ (that will depend on $\delta$ also) \textcolor{black}{and computing $F(\alpha)$} we have that 
  \bgs{ g(t)\leq  g(t_{\mu} )=&\;F(t_{\mu} )  - C_3 \eps \frac{t_{\mu} ^{q+1}}{q+1} \mu^{\frac{(2s-n)(q+1)}2+n}  \leq F(\alpha)- C_3  \eps\mu^{\frac{(2s-n)(q+1)}2+n}   \\
\leq &\; \lr{\frac{1}2-\frac{1}{\cx} } (1+C_1\mu^{n-2s})^{\frac{n}{2s}} (1-C_2\mu^{n})^{\frac{2s-n}{2s}}  - C_3  \eps\mu^{\frac{(2s-n)(q+1)}2+n}    \\
=&\; \frac{s}n +C_1 \mu^{ n-2s} + o(\mu^{n-2s})  - C_3  \eps\mu^{\frac{(2s-n)(q+1)}2+n} .} 
Renaming the constants, from \eqref{fltx} we have that \textcolor{black}{for any $\mu \in (0,\min\{\mu^*,\mu_1\})$}
 \[ \Fl (t\bar Z_{\mu,\xi}) \leq \frac{s}n S^{\frac{n}{2s}} +C_1 \mu^{ n-2s} + o(\mu^{n-2s})  - C_3  \eps\mu^{\frac{(2s-n)(q+1)}2+n}.\] \textcolor{black}{This concludes the proof of Lemma \ref{haha}.}
\end{proof}

\section{Proof of Theorem \ref{thm}}\label{proofThm}

\noindent Let us take $\mu=\eps^\beta$ with $\beta$ satisfying
\eqlab{ \label{beta1}\frac{2}{n(q+1)-2s(q+3)} <\beta<\frac{\delta}{\frac{(2s-n)(q+1)}{2}+n},}
and $\delta>0$ large enough to have both conditions satisfied
\textcolor{black}{(notice that both denominators are positive by hypothesis)}. This gives in particular that
\[\beta(n-2s)>1+\beta\left[ \frac{(2s-n)(q+1)}2+n \right].\] 
\noindent\textcolor{black}{ Consider now the case $n\in (2s,6s)$.} For $\varepsilon$ small enough, from Lemma \ref{haha} and renaming the constants, we obtain
\begin{equation*}\begin{split}
c_\varepsilon+C\varepsilon^{1+\delta}&\leq \frac{s}{n}S^{n/2s}+C\varepsilon^{\beta(n-2s)}+o(\varepsilon^{\beta(n-2s)})-C\varepsilon^{1+\beta\left({\frac{(2s-n)(q+1)}{2}+n}\right)}\\
&\leq \frac{s}{n}S^{n/2s}-C\varepsilon^{1+\beta\left({\frac{(2s-n)(q+1)}{2}+n}\right)}<\frac{s}{n}S^{n/2s},
\end{split}\end{equation*}
that is assumption (i) of Theorem \ref{PSCthm} for $n\in(2s,6s)$. 
\medskip

On the other hand, if $n\geq 6s$ we have that
$$q\geq 1 >\frac{n}{n-s}\frac{n}{n-2s}-1,$$
\textcolor{black}{which assures that \[ \frac{2}{n(q+1)-2s(q+3)} <\frac{\cx}{(n-2s)[\cx-(q+1)]} .\] So we pick now $\beta$ with the additional condition \bgs{ \frac{2}{n(q+1)-2s(q+3)}<\beta<\frac{\cx}{(n-2s)[\cx-(q+1)]} }(still taking $\delta>0$ such that \eqref{beta1} is satisfied).
In particular we have that \[ 1+\beta\left[\frac{(2s-n)(q+1)}2+n \right]< \frac{\cx}{\cx-(q+1)}.\] 
Therefore for $\eps$ small enough, we get from Lemma \ref{haha} that
\bgs{ c_\eps+c_1\eps^{1+\delta}+\overline c\eps^{\frac{\cx}{\cx-(q+1)}}<&\; \frac{s}n +C\eps^{\beta(n-2s)} -C \eps^{1+\beta\left({\frac{(2s-n)(q+1)}{2}+n}\right)}\\ <&\;\frac{s}{n}S^{n/2s} -C  \varepsilon^{1+\beta\left({\frac{(2s-n)(q+1)}{2}+n}\right)} <\frac{s}{n}S^{n/2s},}
that is assumption (i) of Theorem \ref{PSCthm} for $n>6s$.}
%

\noindent Hence, Theorem \ref{PSCthm} yields that the operator $\Fl$ satisfies the Palais-Smale condition. Moreover, Lemma \ref{haha} assures that it has geometry of Mountain Pass and therefore we conclude the existence of a critical point of $\Fl$. According to the considerations made at the end of Section \ref{intr}, this implies the existence of a positive solution of \eqref{problem} and concludes the proof of Theorem \ref{thm}.

\end{document}